\title{\huge\textbf{A Novel Approach to Handling the Non-Central Dirichlet Distribution}}
\author{
Carlo Orsi\footnote{e-mail: \mail{orsi.carlo@gmail.com}}
\\
\normalsize Ph.D. in Statistics at University of Milano-Bicocca (Milan, Italy),\\
\normalsize Casella Postale 13, Ufficio Postale Fermo, 63900 Fermo (Italy)
}
\newcommand{\mail}[1]{\href{mailto:#1}{\texttt{#1}}}
\newcolumntype{C}[1]{>{\centering\let\newline\\\arraybackslash\hspace{0pt}}m{#1}}
\newcolumntype{L}[1]{>{\raggedright\let\newline\\\arraybackslash\hspace{0pt}}m{#1}}
\newcolumntype{R}[1]{>{\raggedleft\let\newline\\\arraybackslash\hspace{0pt}}m{#1}}
\newtheoremstyle{plain}  % follow `plain` defaults but change HEADSPACE.
  {\topsep}   % ABOVESPACE
  {\topsep}   % BELOWSPACE
  {\itshape}  % BODYFONT
  {}       % INDENT (empty value is the same as 0pt)
  {\bfseries} % HEADFONT
  {}         % HEADPUNCT
  {\newline}  % HEADSPACE. `plain` default: {5pt plus 1pt minus 1pt}
  {}          % CUSTOM-HEAD-SPEC
\theoremstyle{plain}
\newtheorem{property}{Property}[section]
\newtheorem{proposition}{\textbf{Proposition}}[section]
\newtheoremstyle{mystyle}  % follow `plain` defaults but change HEADSPACE.
  {\topsep}   % ABOVESPACE
  {\topsep}   % BELOWSPACE
  {\normalfont}  % BODYFONT
  {0pt}       % INDENT (empty value is the same as 0pt)
  {\bfseries} % HEADFONT
  {.}         % HEADPUNCT
  {\newline}  % HEADSPACE. `plain` default: {5pt plus 1pt minus 1pt}
  {}          % CUSTOM-HEAD-SPEC
\theoremstyle{mystyle}
\begin{document}
\baselineskip24pt
\maketitle

\begin{abstract}
\baselineskip24pt
In the present paper new insights into the study of the Non-central Dirichlet distribution are provided. This latter is the analogue of the Dirichlet distribution obtained by replacing the Chi-Squared random variables involved in its definition by as many non-central ones. Specifically, a novel approach to tackling the analysis of this model is introduced based on a simple conditional density together with a suitable transposition into the non-central framework of a characterizing property of independent Chi-Squared random variables. This approach thus enables to remedy the undeniable mathematical complexity of the joint density function of such distribution by paving the way towards achieving a new attractive stochastic representation as well as a surprisingly simple closed-form expression for its mixed raw moments. 
% 126 words

\vspace*{0.2cm}
\noindent \textit{Keywords}: Simplex, conditional density, conditional independence, representations, mixed moments, computational efficiency.\\
\noindent \textit{MSC} 2010 subject classifications: 62E15
\end{abstract}

% ***************************************************************************************************************************************************
% ***************************************************************************************************************************************************
% ***************************************************************************************************************************************************
% ************************************************************ 1. INTRODUCTION **********************************************************************
% ***************************************************************************************************************************************************
% ***************************************************************************************************************************************************
% ***************************************************************************************************************************************************
\section{Introduction}
\label{sec:introduc}

In the present paper our intent is to provide a valid point of reference for the study of the Non-central Dirichlet distribution \cite{SanNagGup06}, namely the multivariate extension of the Doubly Non-central Beta distribution \cite{OngOrs15}, \cite{Ors21}, this latter being the analogue of the former on the unit interval. In this regard, we are supported by the fact that in recent years such distributions have attracted some interesting applications. For example, in \cite{BotFerBek21} the practical problem of estimating Tsallis entropy in the Bayesian framework when considering a Multinomial likelihood is faced by focusing on previously unconsidered Dirichlet type priors, amongst which the Non-central Dirichlet distribution. In \cite{SchNagWalFla21}, a new non-negative matrix factorization model for data with bounded support between 0 and 1 is introduced based on the Doubly Non-central Beta distribution. Due to the more flexible set of shapes taken on by the density of this latter than that of the Beta and to its simple mixture representation based on a pair of independent Poisson random variables, the resulting model is shown to improve out-of-sample predictive performance on both real and synthetic DNA methylation datasets over state-of-the-art methods in bioinformatics.

That said, our work is organized as follows. Section~\ref{sec:prelim} is devoted to providing the mathematical tool kit and to reviewing the main results of the model under consideration as well as the definition and some relevant properties of the distributions involved in the present study. The core of the matter is addressed in Section~\ref{sec:new.appr.a}, in which a novel approach to analyzing the Non-central Dirichlet distribution is illustrated. This latter is based on an interesting relationship of conditional independence and a suitable conditional density of such a model. The combination of these two ingredients clears the ground for the derivation of a new stochastic representation and of a surprisingly simple solution to the problem of computing the mixed raw moments of the model of interest. In this regard, Section~\ref{sec:simul_res} brings the results of a simulation study aimed at providing numerical validations of the derived moment formula and discussing the advantages of this new formula over the existing one. In conclusion, Section~\ref{sec:concl} contains some final remarks.

% ***************************************************************************************************************************************************
% ***************************************************************************************************************************************************
% ***************************************************************************************************************************************************
% ************************************************************** 2. PRELIMINARIES *******************************************************************
% ***************************************************************************************************************************************************
% ***************************************************************************************************************************************************
% ***************************************************************************************************************************************************

\section{Preliminaries}
\label{sec:prelim}

This section is intended to briefly review the probabilistic models involved in the present study as well as to jog the reader's memory about known results of the distribution under consideration in this paper. However, first of all we shall provide the mathematical tool kit aimed at guaranteeing a comprehensive understanding of such distributions. This latter essentially consists of the notions of ascending factorial and generalized hypergeometric function.

Specifically, denoting the gamma function by $\Gamma\left(\cdot\right)$, the $l$-th ascending factorial or Pochhammer's symbol of $a > 0$ is given by
\begin{equation}
\left(a\right)_l=\frac{\Gamma\left(a+l\right)}{\Gamma\left(a\right)}=\left\{\begin{array}{ll} 1 & \mbox{ if } l=0 \\  \\ a\left(a+1\right) \, \ldots \, \left(a+l-1\right) & \mbox{ if } l \in \mathbb{N}\end{array} \right. 
\label{eq:poch.symb}
\end{equation}
\cite{JohKemKot05}, where the non-negative integer $l$ indicates the number of factors appearing in the above product. Some interesting properties of Eq.~(\ref{eq:poch.symb}) are listed in the following. Firstly, by additively decomposing $l$ into $l_1 , \, l_2 \in \mathbb{N}$, one has that
\begin{equation}
\left(a\right)_{l_1+l_2}=\frac{\Gamma\left(a+l_1+l_2\right)}{\Gamma\left(a\right)}=\left\{\begin{array}{l} \frac{\Gamma\left(a+l_1\right)}{\Gamma\left(a\right)} \, \frac{\Gamma\left(a+l_1+l_2\right)}{\Gamma\left(a+l_1\right)}=\left(a\right)_{l_1} \, \left(a+l_1\right)_{l_2} \\ \\ \frac{\Gamma\left(a+l_2\right)}{\Gamma\left(a\right)} \, \frac{\Gamma\left(a+l_2+l_1\right)}{\Gamma\left(a+l_2\right)}=\left(a\right)_{l_2} \, \left(a+l_2\right)_{l_1}\end{array} \right. \, .
\label{eq:poch.symb.sum}
\end{equation}
Then, the ratio of two ascending factorials of $a$ can be expressed as
\begin{equation}
\frac{\left(a\right)_{l_1}}{\left(a\right)_{l_2}}=\left\{\begin{array}{ll} \left(a+l_2\right)_{l_1-l_2} & \mbox{ if } l_1 \geq l_2 \\ \\ \left[\left(a+l_1\right)_{l_2-l_1}\right]^{-1} & \mbox{ if } l_1<l_2\end{array} \right. \, ,
\label{eq:poch.symb.ratio}
\end{equation}
whereas the following expansion holds true for the Pochhammer's symbol of a binomial:
\begin{equation}
\left(a+b\right)_{l}=\sum_{j=0}^{l} {l \choose j} \left(a\right)_{l-j} \left(b\right)_j \, .
\label{eq:poch.symb.binom}
\end{equation}

By virtue of the foregoing concept, the generalized hypergeometric function with $p$ upper parameters and $q$ lower parameters, $p, \, q \in \mathbb{N} \cup \{0\}$, can be accordingly defined as
\begin{equation}
_p^{\, }F_q^{}\left(a_1,\, \ldots \, ,a_p;b_1,\, \ldots \, ,b_q;x\right)=\sum_{i=0}^{+\infty} \frac{(a_1)_i \, \ldots \, (a_p)_i}{(b_1)_i \, \ldots \, (b_q)_i} \frac{x^{i}}{i \, !} \, , \quad x \in \mathbb{R} \, .
\label{eq:fpq}
\end{equation}
For more details on the convergence of the hypergeometric series in Eq.~(\ref{eq:fpq}) as well as for further results and properties of the function $_p^{\, }F_q^{} \, $, the reader is recommended referring to \cite{SriKar85}. The special case of Eq.~(\ref{eq:fpq}) corresponding to $p=q=1$, namely
\begin{equation}
_1^{\, }F_1^{}\left(a; b; x\right)=\sum_{i=0}^{+\infty} \frac{(a)_i}{(b)_i} \frac{x^{i}}{i \, !} \, , \quad x \in \mathbb{R} \, ,
\label{eq:f11}
\end{equation}
is the so-called Kummer's confluent hypergeometric function. This latter plays a prominent role in the analysis of the model of interest. In this regard, a complete list of properties of $_1^{\, }F_1^{}$ is provided for example by \cite{AbrSte64}. In particular, in the sequel a special focus is given to the following well-established recurrence relations holding for contiguous values of the parameters of this function which are recalled herein (see formulas 13.4.5, 13.4.6 in \cite{AbrSte64}):
\begin{eqnarray}
b\left(a+x\right) \, _1^{\, }F_1^{}\left(a; b; x\right)+x\left(a-b\right) \, _1^{\, }F_1^{}\left(a; b+1; x\right)-ab \, _1^{\, }F_1^{}\left(a+1; b; x\right) & = & 0 \, , \nonumber \\
\left(a-1+x\right) \, _1^{\, }F_1^{}\left(a; b; x\right)+\left(b-a\right) \, _1^{\, }F_1^{}\left(a-1; b; x\right)+\left(1-b\right) \, _1^{\, }F_1^{}\left(a; b-1; x\right) & = & 0  \, . \nonumber \\
\label{eq:f11.recurr.relat}
\end{eqnarray}

That said, denoting independence by $\bot$ and letting $D$ be an integer greater than 1, we remember that the $D$-dimensional Dirichlet model with vector of shape parameters $\underline{\alpha}=\left(\alpha_1,\, \ldots \, ,\alpha_{D+1}\right)$, $\alpha_i>0$, $i=1,\, \ldots \,,D+1$, can be defined as follows:
\begin{eqnarray}
\lefteqn{\left\{\begin{array}{l} Y_i \stackrel{\bot}{\sim } \chi^{\, 2}_{ 2 \alpha_i} \quad i=1,\, \ldots \,,D+1 \\ \\ Y^+=\sum_{i=1}^{D+1} Y_i  \end{array} \right. \quad \Rightarrow } \nonumber \\
& \Rightarrow & \quad \underline{X}=\left(X_1,\, \ldots \, ,X_D\right)=\left(\frac{Y_1}{Y^+},\, \ldots \, ,\frac{Y_D}{Y^+}\right) \sim \mbox{Dir}^{\, D}\left(\alpha_1,\, \ldots \, , \alpha_{D+1}\right)
\label{eq:dir.def}
\end{eqnarray}
\cite{KotBalJoh00}. Its joint density function is given by
\begin{equation}
\mbox{Dir}^{\, D}\left(\underline{x};\underline{\alpha}\right)=\frac{\Gamma\left(\alpha^+\right)}{\prod_{i=1}^{D+1}\Gamma\left(\alpha_i\right)}\left[\prod_{i=1}^{D}x_i^{\alpha_i-1}\right] \left(1-\sum_{i=1}^{D}x_i\right)^{\alpha_{D+1}-1} \, , \qquad \underline{x} \in \mathcal{S}^{\, D} \, ,
\label{eq:dir.dens}
\end{equation}
where $\alpha^+=\sum_{i=1}^{D+1}\alpha_i$ and
\begin{equation}
\mathcal{S}^{\, D}=\left\{\underline{x}=\left(x_1,\, \ldots \, ,x_D\right) \, : \; 0 < x_i <1 \, , \; i=1,\, \ldots \, ,D \; , \; \sum_{i=1}^{D} x_i<1\right\}
\label{eq:d.unit.simpl}
\end{equation}
is the unitary simplex in $\mathbb{R}^{\, D}$. For $D=1$, the set specified in Eq.~(\ref{eq:d.unit.simpl}) corresponds to the Real interval $(0,1)$ and the Dirichlet distribution reduces to the Beta one. For the ends of this paper it is useful bearing in mind that, in the notation of Eq.~(\ref{eq:dir.def}), the Dirichlet distribution can be also obtained as conditional distribution of $\underline{X}$ given $Y^+$, the former being independent of the latter by virtue of a characterizing property of independent Gamma random variables \cite{Luk55}. The relevance of this property is remarkable in the present context; therefore, it is made explicit in the following.

% ***************************************************************************************************************************************************
% **************************** PROPERTY 2.1: (LUKACS 1955) CHARACTERIZING PROPERTY OF INDEPENDENT GAMMA RANDOM VARIABLES ****************************
% ***************************************************************************************************************************************************
\begin{property}[\cite{Luk55} Characterizing property of independent Gamma random variables]
\label{prope:char.prop.chisq}
Let $Y_1$, $Y_2$ be two nondegenerate and positive random variables and suppose that they are independently distributed. The random variables $Y^+=Y_1+Y_2$ and $X_i=Y_i \, / \, Y^+$, for every $i=1,2$, are independently distributed if and only if both $Y_1$ and $Y_2$ has Gamma distributions with the same scale parameter.
\end{property}
% ***************************************************************************************************************************************************
% ***************************************************************************************************************************************************
\noindent Clearly, Property~\ref{prope:char.prop.chisq} is valid also in the case that the number of the involved independent Gamma random variables is greater than two. In particular, by setting the common scale parameter of these latter equal to $1/2$, such property holds true also for any finite number of independent Chi-Squared random variables. Finally, by Eq.~(\ref{eq:poch.symb}), the mixed raw moment of order $(r_1,r_2)$ of the $\mbox{\normalfont{Dir}}^{\, 2}\left(\alpha_1,\alpha_2,\alpha_3\right)$ distribution can be stated as
\begin{equation}
\mathbb{E}\left(X_1^{\, r_1} \, X_2^{\, r_2}\right)=\frac{\left(\alpha_1\right)_{r_1} \, \left(\alpha_2\right)_{r_2}}{\left(\alpha^+\right)_{r^+}} \, , \qquad \left\{\begin{array}{l} r_1, \, r_2 \in \mathbb{N} \\ \\ r^+=r_1+r_2 \end{array} \right. \, .
\label{eq:dir.mixr1r2mom}
\end{equation}  

The non-central extension of the Chi-Squared model \cite{JohKotBal95} is the main ingredient in the definition and the analysis of the Non-central Dirichlet distribution. Specifically, a Non-central Chi-Squared random variable $Y'$ with $g>0$ degrees of freedom and non-centrality parameter $\lambda \geq 0$, denoted by $\chi'^{\,2}_g \left(\lambda \right)$, can be characterized by means of the following mixture representation:
\begin{equation}
Y' \sim \chi'^{\,2}_g \left(\lambda \right) \qquad \Leftrightarrow \qquad \left\{\begin{array}{l} Y'\,| \, M \; \sim \; \chi^{\, 2}_{g+2M} \\ \\ M \sim \mbox{Poisson}\left(\lambda/2\right) \end{array} \right. \, ,
\label{eq:mixrepres.ncchisq}
\end{equation}
the case $\lambda=0$ corresponding to the $\chi^{\, 2}_g$ distribution. Moreover, such a random variable can be additively decomposed into two independent parts, a central one with $g$ degrees of freedom and a purely non-central one with non-centrality parameter $\lambda$, namely
\begin{equation}
Y'=Y+\sum_{j=1}^{M}F_j \, , \quad \mbox{\normalsize{where}} \; \; \;  Y \sim \chi^{\, 2}_g \quad \bot \quad M \sim \mbox{\normalfont{Poisson}}\left(\lambda/2\right) \quad \bot \quad \{F_j \stackrel{\bot}{\sim } \chi^{\, 2}_2 \} \,  .
\label{eq:sumrepres.ncchisq}
\end{equation}
\noindent By virtue of Eq.~(\ref{eq:sumrepres.ncchisq}), the random variable $Y'_{pnc}=\sum_{j=1}^{M}F_j$ is said to have a Purely Non-central Chi-Squared distribution with non-centrality parameter $\lambda$. Indeed, we shall denote it by $\chi'^{\,2}_0 \left(\lambda \right)$, its number of degrees of freedom being equal to zero \cite{Sie79}. Finally, the Non-central Chi-Squared distribution is reproductive with respect to both the number of degrees of freedom and the non-centrality parameter; specifically:
\begin{equation}
Y'_i \stackrel{\bot}{\sim } \chi'^{\, 2}_{g_i}(\lambda_i) \quad i=1,\, \ldots \, ,m  \qquad \Rightarrow \qquad \left\{\begin{array}{l} Y'^{+}=\sum_{i=1}^m Y'_i \sim \chi'^{\, 2}_{g^+}(\lambda^+) \\ \\ g^+=\sum_{i=1}^m g_i \, , \; \lambda^+=\sum_{i=1}^m \lambda_i \end{array} .\right.
\label{eq:ncchisq.reprod}
\end{equation}

We finally end the present section recalling the definition and a number of properties of the model at study. Specifically, the $D$-dimensional Non-central Dirichlet model with vector of shape parameters $\underline{\alpha}=\left(\alpha_1,\, \ldots \, ,\alpha_{D+1}\right)$ and vector of non-centrality parameters $\underline{\lambda}=\left(\lambda_1,\, \ldots \, ,\lambda_{D+1}\right)$, $\lambda_i \geq 0$, $i=1,\, \ldots \,,D+1$, $D \geq 2$, denoted by $\mbox{NcDir}^{\, D}\left(\underline{\alpha},\underline{\lambda}\right)$, can be easily defined by replacing the $Y_i \,$'s by $Y'_i \stackrel{\bot}{\sim } \chi'^{\,2}_{2 \alpha_i} \left(\lambda_i \right)$ in Eq.~(\ref{eq:dir.def}) as follows:
\begin{eqnarray}
\lefteqn{\left\{\begin{array}{l} Y'_i \stackrel{\bot}{\sim } \chi'^{\,2}_{2 \alpha_i} \left(\lambda_i \right) \quad i=1,\, \ldots \, ,D+1 \\ \\ Y'^{+}=\sum_{i=1}^{D+1} Y'_i  \end{array} \right. \Rightarrow } \nonumber \\
& \Rightarrow & \quad \underline{X}'=\left(X'_1,\, \ldots \, ,X'_D\right)=\left(\frac{Y'_1}{Y'^{+}},\, \ldots \, ,\frac{Y'_D}{Y'^{+}}\right) \sim \mbox{NcDir}^{\, D}\left(\underline{\alpha},\underline{\lambda}\right)
\label{eq:ncdir.def}
\end{eqnarray}
\noindent \cite{SanNagGup06}. The joint density function of the $\mbox{NcDir}^{\, D}\left(\underline{\alpha},\underline{\lambda}\right)$ distribution can be readily derived by following the next arguments; in this regard, let:
\begin{eqnarray}
\lefteqn{\underline{M}=\left(M_1,\, \ldots \, ,M_{D+1}\right) \sim \mbox{\normalfont{Multi-Poisson}}^{\, D+1}(\underline{\lambda} \, / \, 2) \qquad \Leftrightarrow}\nonumber \\
& \qquad \Leftrightarrow \qquad & M_i \stackrel{\bot}{\sim } \mbox{\normalfont{Poisson}}(\lambda_i \, / \, 2) \, , \; \quad  i=1,\, \ldots \, ,D+1 \; .
\label{eq:multipois.def}
\end{eqnarray}
Then, in the notation of Eq.~(\ref{eq:ncdir.def}), by Eq.~(\ref{eq:mixrepres.ncchisq}) the conditional distribution of $Y'_i$ given $\underline{M}$ is of $\chi^{\, 2}_{2 \alpha_i+2 M_i}$ type, $i=1,\, \ldots \, ,D+1$; therefore, by Eq.~(\ref{eq:dir.def}), the following mixture representation holds true for the NcDir distribution:
\begin{equation}
\underline{X}' \sim \mbox{NcDir}^{\, D}\left(\underline{\alpha},\underline{\lambda}\right) \qquad \Leftrightarrow \qquad \left\{\begin{array}{l} \left. \underline{X}' \, \right| \, \underline{M}  \sim \mbox{Dir}^{\, D}\left(\underline{\alpha}+\underline{M}\right) \\ \\ \underline{M} \sim \mbox{\normalfont{Multi-Poisson}}^{\, D+1}(\underline{\lambda} \, / \, 2) \end{array} \right. \, .
\label{eq:ncdir.mixt.repres}
\end{equation}
Hence, the density of $\underline{X}' \sim \mbox{NcDir}^{\, D}\left(\underline{\alpha},\underline{\lambda}\right)$ can be stated as
\begin{equation}
\mbox{NcDir}^{\, D}\left(\underline{x};\underline{\alpha},\underline{\lambda}\right)=\sum_{\underline{j} \in \mathbb{N}_0^{\, D+1}} \left[\Pr\left(\underline{M}=\underline{j}\right) \cdot \mbox{Dir}^{\, D}\left(\underline{x};\underline{\alpha}+\underline{j}\right)\right] \, , \qquad \underline{x} \in \mathcal{S}^{\, D} \, ,
\label{eq:ncdir.dens}
\end{equation}
i.e. as the multiple infinite series of the $\mbox{Dir}^{\, D}\left(\underline{\alpha}+\underline{j}\right)$ densities, $\underline{j} \in \mathbb{N}_0^{\, D+1}$, where $\mathbb{N}_0=\mathbb{N} \cup \{0\}$, weighted by the joint probabilities of the random vector $\underline{M}$ defined in Eq.~(\ref{eq:multipois.def}). The $(D+1)$-variate Multi-Poisson distribution thus plays the role of mixing distribution in the mixture type form of the NcDir$^{\, D}$ density. Moreover, the function in Eq.~(\ref{eq:ncdir.dens}) can be equivalently expressed in terms of perturbation of the corresponding central case in Eq.~(\ref{eq:dir.dens}) as follows:
\begin{eqnarray}
\lefteqn{\mbox{NcDir}^{\, D}\left(\underline{x};\underline{\alpha},\underline{\lambda}\right)=\mbox{Dir}^{\, D}\left(\underline{x};\underline{\alpha}\right) \quad \cdot} \nonumber \\
& \cdot & e^{-\frac{\lambda^+}{2}} \,  \Psi_2^{\, (D+1)}\left[\alpha^+;\underline{\alpha};\frac{\lambda_1}{2}x_1,\, \ldots \, ,\frac{\lambda_D }{2}x_D,\frac{\lambda_{D+1}}{2}\left(1-\sum_{i=1}^{D}x_i\right)\right] \, , \qquad \underline{x} \in \mathcal{S}^{\, D} \qquad
\label{eq:ncdir.perturb.dens}
\end{eqnarray}
where 
\begin{equation}
\Psi_2^{\, (m)}\left[a;b_1,\, \ldots \, ,b_m;x_1,\, \ldots \, ,x_m\right]=\sum_{j_1,\, \ldots \, , \, j_m= \, 0}^{+\infty}\frac{(a)_{j_1+\, \ldots \, +j_m}}{(b_1)_{j_1} \, \ldots \, (b_m)_{j_m}} \, \frac{x_1^{\, j_1}}{j_1!} \, \ldots \, \frac{x_m^{\, j_m}}{j_m!}
\label{eq:ncdir.perturb}
\end{equation}
is the $m$-dimensional ($m>2$) generalization of the Humbert's confluent hypergeometric function $\Psi_2$ \cite{SriKar85}. Unfortunately, the perturbation representation of the Non-central Dirichlet density in Eq.~(\ref{eq:ncdir.perturb.dens}) highlights its poor tractability from a mathematical standpoint on one side and its uneasy interpretability on the other. As a matter of fact, regardless of the constant term, the Dirichlet density is perturbed by a function in $D+1$ variables the only known expression for which, as far as we know, is given by the multiple power series in Eq.~(\ref{eq:ncdir.perturb}); indeed, to our knowledge, this latter function cannot be reduced into an easier form. Therefore, due to its inner structure that is too complex to be handled analytically, the effect of the perturbing factor on the Dirichlet density cannot be clearly identified on varying the parameter vector. 

Finally, amongst the properties of the Non-central Dirichlet model, herein we recall the closure under marginalization, which is shared with the Dirichlet. Specifically, as far as the investigation of the two-dimensional marginals of $\underline{X}'=\left(X'_1, \, \ldots \, ,  X'_D\right) \sim \mbox{\normalfont{NcDir}}^{\, D}\left(\underline{\alpha},\underline{\lambda}\right)$ where $\underline{\alpha}=\left(\alpha_1,\, \ldots \, ,\alpha_{D+1}\right)$ and $\underline{\lambda}=\left(\lambda_1,\, \ldots \, ,\lambda_{D+1}\right)$ is concerned, for every $p,\, q=1, \, \ldots \, ,D$, $p \neq q$, the following result applies:
\begin{equation}
\left(X'_p,X'_q\right) \sim \mbox{\normalfont{NcDir}}^{\, 2}\left(\alpha_p,\alpha_q,\alpha^+-(\alpha_p+\alpha_q),\lambda_p,\lambda_q,\lambda^+-(\lambda_p+\lambda_q)\right) \, . 
\label{eq:ncdir.bidim.marg}
\end{equation}
Clearly, similar results hold for the $k$-dimensional marginals ($k>2$). Last but not the least interesting, the one-dimensional marginals are of Doubly Non-central Beta (DNcB) type; specifically, for every $p=1, \, \ldots \, ,D$:
$$
X'_p \sim \mbox{\normalfont{DNcB}}\left(\alpha_p, \alpha^+-\alpha_p,\lambda_p, \lambda^+-\lambda_p\right) \, .
$$
In this regard, for a first analysis and an in-depth study of the DNcB distribution, the reader can be referred to \cite{OngOrs15} and \cite{Ors21}, respectively.

% ***************************************************************************************************************************************************
% ***************************************************************************************************************************************************
% ***************************************************************************************************************************************************
% ******************************************************* 3. A NEW APPROACH OF ANALYSIS *************************************************************
% ***************************************************************************************************************************************************
% ***************************************************************************************************************************************************
% ***************************************************************************************************************************************************

\section{A new approach of analysis}
\label{sec:new.appr.a}
 
As already stressed, the main weakness of the Non-central Dirichlet distribution lies in its poor tractability from a mathematical standpoint. Despite the growing variety of applications attracted by this model in recent years (see, for example, \cite{BotFerBek21} and \cite{SchNagWalFla21}), the above drawback of the NcDir distribution poses strong limitations on its use as a model for data on the unitary simplex.

That said, in the present section new insights into the study of the model of interest are illustrated. These latter give rise to a novel approach to deeply analyzing it. This approach has its origin in the following realization: in view of the arguments recalled in Section~\ref{sec:prelim}, Property~\ref{prope:char.prop.chisq} is no longer valid in the non-central setting. Just an interesting generalization of this property to the non-central framework is the first ingredient at the root of the above approach. The second one is a suitable conditional density of this distribution expressed as a simple mixture of Dirichlet densities. This latter finding, which is the first to be established herein, enables to overcome the limitations of the NcDir density stemming from its remarkable mathematical complexity. 

% **************************************************************************************************************************************************
% ********************************************** PROPOSITION 3.1: CONDITIONAL DENSITY GIVEN M^+ ****************************************************
% **************************************************************************************************************************************************
\begin{proposition}[Conditional density given $M^+$]
\label{propo:ncdir.cond.distr.mplus}
Let $\underline{X}'=\left(X'_1,\, \ldots \, ,X'_D\right) \sim \mbox{\normalfont{NcDir}}^{\, D}\left(\underline{\alpha},\underline{\lambda}\right)$ where $\underline{\alpha}=\left(\alpha_1,\, \ldots \, ,\alpha_D,\alpha_{D+1}\right)$,  $\underline{\lambda}=\left(\lambda_1,\ldots, \right.$ $\left.\lambda_D,\lambda_{D+1}\right)$ and $\underline{M}=\left(M_1,\, \ldots \, ,M_D,M_{D+1}\right) \sim \mbox{\normalfont{Multi-Poisson}}^{\, D+1}(\underline{\lambda} \, / \, 2)$ with $M^+=\sum_{i=1}^{D+1}M_i$. Then, the conditional density of $\underline{X}'$ given $M^+$ is
\begin{eqnarray}
\lefteqn{f_{\left.\underline{X}' \, \right| \, M^+}\left(\underline{x};\underline{\alpha}, \underline{\lambda}\right)=\sum_{j^+ \, \leq \, M^+}\left[\mbox{\normalfont{Multinomial}}^{\, D}\left(j_1,\, \ldots \, ,j_D;\, M^+,\frac{\lambda_1}{\lambda^+},\, \ldots \, ,\frac{\lambda_D}{\lambda^+}\right) \cdot \right.}\nonumber \\
& \cdot & \left. \mbox{\normalfont{Dir}}^{\, D}\left(\underline{x};\alpha_1+j_1,\, \ldots \, ,\alpha_D+j_D,\alpha_{D+1}+M^+-j^+\right)\right] \qquad \left\{\begin{array}{l} \left(j_1,\, \ldots \, ,j_D\right) \in \mathbb{N}_0^{\, D} \\ \\ j^+=\sum_{i=1}^{D}j_i \end{array} \right.
\label{eq:ncdir.cond.distr.mplus}
\end{eqnarray}
where $\mbox{\normalfont{Multinomial}}^{\, D}\left(j_1,\, \ldots \, ,j_D;\, M^+,\frac{\lambda_1}{\lambda^+},\, \ldots \, ,\frac{\lambda_D}{\lambda^+}\right)$ denotes the joint probability mass function of the $\mbox{\normalfont{Multinomial}}^{\, D}\left(M^+,\frac{\lambda_1}{\lambda^+},\, \ldots \, ,\frac{\lambda_D}{\lambda^+}\right)$ distribution evaluated in $\left(j_1,\, \ldots \, ,j_D\right) \in \mathbb{N}_0^{\, D}$, with $\mathbb{N}_0=\mathbb{N} \cup \{0\}$.
\end{proposition}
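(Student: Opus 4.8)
The plan is to assemble the $M^+$-conditional density of $\underline{X}'$ from two facts already available: the mixture representation in Eq.~(\ref{eq:ncdir.mixt.repres}), which gives $\left.\underline{X}'\,\right|\,\underline{M}\sim\mbox{Dir}^{\,D}(\underline{\alpha}+\underline{M})$, and the elementary behaviour of independent Poisson counts conditioned on their sum. Since $M^+=\sum_{i=1}^{D+1}M_i$ is a deterministic function of $\underline{M}$, conditioning on $\underline{M}$ already fixes $M^+$, so $\underline{X}'$ is conditionally independent of $M^+$ given $\underline{M}$ and $f_{\left.\underline{X}'\,\right|\,\underline{M},\,M^+}=f_{\left.\underline{X}'\,\right|\,\underline{M}}$. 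By the tower property, for each admissible value $m$ of $M^+$ one then has
\[
f_{\left.\underline{X}'\,\right|\,M^+=m}(\underline{x})=\sum_{\underline{j}\,:\,\sum_{i=1}^{D+1}j_i=m}\Pr\!\left(\underline{M}=\underline{j}\mid M^+=m\right)\,\mbox{Dir}^{\,D}\!\left(\underline{x};\underline{\alpha}+\underline{j}\right),
\]
the sum ranging over all allocations of the total $m$ among the $D+1$ cells.

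First I would identify the conditional law $\underline{M}\mid M^+$. As a sum of independent Poisson variables, $M^+\sim\mbox{Poisson}(\lambda^+/2)$ with $\lambda^+=\sum_{i=1}^{D+1}\lambda_i$; dividing the joint mass $\prod_{i=1}^{D+1}e^{-\lambda_i/2}(\lambda_i/2)^{j_i}/j_i!$ by $e^{-\lambda^+/2}(\lambda^+/2)^m/m!$ on the event $\sum_i j_i=m$ cancels the exponential factors and the powers of $\lambda^+/2$, leaving precisely the multinomial mass $\binom{m}{j_1,\ldots,j_{D+1}}\prod_{i=1}^{D+1}(\lambda_i/\lambda^+)^{j_i}$. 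This is the standard statement that independent Poisson counts, conditioned on their total, follow a Multinomial law with cell probabilities proportional to the individual means.

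Next I would substitute this weight into the display and reparametrize the index. Setting $j_{D+1}=m-j^+$ with $j^+=\sum_{i=1}^{D}j_i$ converts the constraint $\sum_{i=1}^{D+1}j_i=m$ into a free sum over $(j_1,\ldots,j_D)\in\mathbb{N}_0^{\,D}$ subject to $j^+\le m$; the last Dirichlet shape becomes $\alpha_{D+1}+m-j^+$ and the multinomial weight is exactly $\mbox{Multinomial}^{\,D}(j_1,\ldots,j_D;m,\lambda_1/\lambda^+,\ldots,\lambda_D/\lambda^+)$. Replacing the dummy $m$ by $M^+$ then reproduces Eq.~(\ref{eq:ncdir.cond.distr.mplus}).

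The Poisson-to-multinomial cancellation and the bookkeeping of the last index are routine; the one step deserving care is the opening equality, namely that the $M^+$-conditional density equals the $\underline{M}$-conditional Dirichlet averaged against $\underline{M}\mid M^+$. I expect this to be the main, if modest, obstacle: it relies on the conditional independence of $\underline{X}'$ and $M^+$ given $\underline{M}$, which holds automatically here because $M^+$ is $\sigma(\underline{M})$-measurable, so the tower property applies with no extra hypotheses.
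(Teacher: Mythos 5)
Your proposal is correct and follows essentially the same route as the paper: it likewise expands $f_{\left.\underline{X}'\,\right|\,M^+}$ by the tower property over the components of $\underline{M}$ given $M^+$, invokes the Multi-Poisson-to-Multinomial conditional law of Eq.~(\ref{eq:multipois.cond.sum}) (which the paper cites as well known where you derive it), and identifies the inner conditional as the Dirichlet density coming from the mixture representation in Eq.~(\ref{eq:ncdir.mixt.repres}). Your added care at the opening step (that $M^+$ is a function of $\underline{M}$, so the tower property needs no extra hypotheses) and your explicit elimination of $j_{D+1}=M^+-j^+$ merely spell out bookkeeping the paper leaves implicit.
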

\begin{proof}
The proof ensues from the statement 
\begin{eqnarray*}
f_{\left.\underline{X}' \, \right| \, M^+}\left(\underline{x};\underline{\alpha}, \underline{\lambda}\right) & = & \sum_{j^+ \, \leq \, M^+} \left[\, \Pr\left(\left. \, \left(M_1, \, \ldots \, ,M_D\right)=\left(j_1,\, \ldots \, ,j_D\right) \, \right| \, M^+\right) \right. \cdot\\
& \cdot & \left. f_{\left.\underline{X}' \, \right| \, \left(\, M_1, \, \ldots \, ,M_D, M^+\right)}\left(\underline{x};\underline{\alpha}, \underline{\lambda}, \left(j_1, \, \ldots \, ,j_D\right)\right) \, \right] \, ,
\end{eqnarray*}
where $\left(j_1,\, \ldots \, ,j_D\right) \in \mathbb{N}_0^{\, D}$, $j^+=\sum_{i=1}^{D}j_i$, by using the following well-known property of the Multi-Poisson distribution:
\begin{eqnarray}
\lefteqn{\underline{M}=\left(M_1,\, \ldots \, ,M_{D+1}\right) \sim \mbox{\normalfont{Multi-Poisson}}^{\, D+1}(\underline{\lambda} \, / \, 2) \quad \quad \Rightarrow}\nonumber \\
& \Rightarrow & \quad \left. \left(M_1,\, \ldots \, ,M_{D}\right) \, \right| \, M^+ \sim \mbox{\normalfont{Multinomial}}^{\, D}\left(M^+,\frac{\lambda_1}{\lambda^+},\, \ldots \, ,\frac{\lambda_D}{\lambda^+}\right)
\label{eq:multipois.cond.sum}
\end{eqnarray}
and by noting that
\begin{eqnarray}
\lefteqn{\left. \underline{X}' \, \right| \, \left(\, M_1, \, \ldots \, ,M_D,M^+\right) \quad \sim }\nonumber \\
& \sim &  \quad \mbox{\normalfont{Dir}}^{\, D}\left(\alpha_1+M_1,\, \ldots \, ,\alpha_D+M_D,\alpha_{D+1}+M^+-\sum_{i=1}^{D}M_i\right)
\label{eq:eq:ncdir.mixt.repres2}
\end{eqnarray}
is a mere rearrangement of the mixture representation of the $\mbox{\normalfont{NcDir}}^{\, D}$ distribution in Eq.~(\ref{eq:ncdir.mixt.repres}).
\end{proof}

As mentioned before, in the following a Non-central Dirichlet random vector is proved to be independent of the sum of the Non-central Chi-Squared random variables involved in its definition in a suitable conditional form. More precisely, in the notation of Eq.~(\ref{eq:ncdir.def}), this independence relationship applies conditionally on the sum $M^+$ of the components of the Multi-Poisson random vector $\underline{M}$.

% **************************************************************************************************************************************************
% ******************************************** PROPOSITION 3.2: CONDITIONAL INDEPENDENCE GIVEN M+ **************************************************
% **************************************************************************************************************************************************
\begin{proposition}[Conditional independence given $M^+$]
\label{propo:ncdir.condit.indep}
Let $\underline{X}' \sim \mbox{\normalfont{NcDir}}^{\, D}\left(\underline{\alpha},\underline{\lambda}\right)$ where $\underline{\alpha}=\left(\alpha_1,\, \ldots \, ,\alpha_D,\alpha_{D+1}\right)$, $\underline{\lambda}=\left(\lambda_1, \, \ldots \, ,\lambda_D,\lambda_{D+1}\right)$ and $\underline{M}=\left(M_1,\, \ldots \, ,M_D,M_{D+1}\right) \sim \mbox{\normalfont{Multi-Poisson}}^{\, D+1}(\underline{\lambda} \, / \, 2)$ with $M^+=\sum_{i=1}^{D+1} M_i$. Then, $\underline{X}'$ and $Y'^+$ are conditionally independent given $M^+$.
\end{proposition}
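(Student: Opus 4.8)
The plan is to condition first on the full Multi-Poisson vector $\underline{M}$ rather than on $M^+$ alone, to exploit the characterizing property recalled in Property~\ref{prope:char.prop.chisq}, and only afterwards to average over $\underline{M}$ given $M^+$. By the mixture representation in Eq.~(\ref{eq:ncdir.mixt.repres}) together with Eq.~(\ref{eq:mixrepres.ncchisq}), conditionally on $\underline{M}=\left(M_1,\, \ldots \, ,M_{D+1}\right)$ the variables $Y'_i$ are independent with $Y'_i \, | \, \underline{M} \sim \chi^{\, 2}_{2\alpha_i + 2M_i}$, that is, conditionally independent Chi-Squared random variables with common scale $1/2$. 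Hence the multivariate version of Property~\ref{prope:char.prop.chisq} recalled just after its statement applies conditionally on $\underline{M}$ and yields that the normalized vector $\underline{X}'$ is independent of the total $Y'^+$ given $\underline{M}$; in density terms, $f_{\underline{X}',Y'^+ \, | \, \underline{M}}=f_{\underline{X}' \, | \, \underline{M}} \cdot f_{Y'^+ \, | \, \underline{M}}$.

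The decisive observation is that the two conditional factors depend on $\underline{M}$ in very different ways. On one hand, $f_{\underline{X}' \, | \, \underline{M}}$ is the $\mbox{Dir}^{\, D}\left(\underline{\alpha}+\underline{M}\right)$ density of Eq.~(\ref{eq:eq:ncdir.mixt.repres2}), which genuinely uses the whole vector $\underline{M}$. On the other hand, by the reproductivity in Eq.~(\ref{eq:ncchisq.reprod}) the conditional law of the sum is $Y'^+ \, | \, \underline{M} \sim \chi^{\, 2}_{2\alpha^+ + 2M^+}$, so it depends on $\underline{M}$ only through $M^+$; consequently $f_{Y'^+ \, | \, \underline{M}}=f_{Y'^+ \, | \, M^+}$, the latter being the $\chi^{\, 2}_{2\alpha^+ + 2M^+}$ density.

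First I would write the conditional joint density of $\left(\underline{X}',Y'^+\right)$ given $M^+$ via the law of total probability, summing over the admissible values $\underline{j}=\left(j_1,\, \ldots \, ,j_{D+1}\right)$ of $\underline{M}$ with $j_1+\cdots+j_{D+1}=M^+$, weighted by $\Pr\left(\underline{M}=\underline{j} \, | \, M^+\right)$, and then substitute the conditional factorization obtained above. Since $f_{Y'^+ \, | \, \underline{M}}$ equals $f_{Y'^+ \, | \, M^+}$ for every such $\underline{j}$, this factor is constant over the summation index and can be pulled out of the sum; what remains inside is precisely the weighted mixture of $\mbox{Dir}^{\, D}\left(\underline{\alpha}+\underline{j}\right)$ densities that defines $f_{\underline{X}' \, | \, M^+}$ in Proposition~\ref{propo:ncdir.cond.distr.mplus}. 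One thus reaches the factorization $f_{\underline{X}',Y'^+ \, | \, M^+}=f_{\underline{X}' \, | \, M^+} \cdot f_{Y'^+ \, | \, M^+}$, which is exactly the sought conditional independence.

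The step I expect to be the main obstacle, and the real content of the statement, is the passage from conditioning on $\underline{M}$ to conditioning on $M^+$. Conditional independence is not in general inherited when the conditioning variable is coarsened from $\underline{M}$ to the single summary $M^+$; here it survives only because, conditionally on $\underline{M}$, the sum $Y'^+$ is a function of $M^+$ alone through its reproductive behaviour. Making this reduction explicit is what lets the $Y'^+$ factor detach cleanly from the $\underline{M}$-dependent Dirichlet mixture, and it is the point that deserves the most care.
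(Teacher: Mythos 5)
Your proof is correct and takes essentially the same route as the paper's: you condition on the full vector $\underline{M}$ (equivalent to the paper's conditioning on $\left(M_1,\, \ldots \, ,M_D,M^+\right)$), apply Property~\ref{prope:char.prop.chisq} at that level, observe via Eq.~(\ref{eq:ncchisq.reprod}) that the conditional law of $Y'^+$ depends on $\underline{M}$ only through $M^+$, and then pull that factor out of the multinomial-weighted mixture over $\Pr\left(\left. \underline{M}=\underline{j} \, \right| \, M^+\right)$ to land on $f_{\left.\underline{X}' \, \right| \, M^+} \cdot f_{\left.Y'^+ \, \right| \, M^+}$. The coarsening step you rightly single out as the crux is precisely the paper's Eq.~(\ref{eq:ypplus.cond.chisq}), so nothing is missing.
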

\begin{proof}
In the wake of the statement in Eq.~(\ref{eq:eq:ncdir.mixt.repres2}), by Eq.~(\ref{eq:mixrepres.ncchisq}), the ingredients in the definition of the NcDir$^{\, D}$ model in Eq.~(\ref{eq:ncdir.def}) can be analogously represented as follows:
$$Y'_i \left. \, \right| \, \left(M_1, \, \ldots \, , M_D,M^+\right) \; \stackrel{\bot}{\sim } \; \left\{\begin{array}{ll} \chi^2_{2 \alpha_i+2 M_i} &  , \; i=1, \, \ldots \, , D \\ \\ \chi^2_{2 \alpha_{D+1}+2 \left(M^+-\sum_{r=1}^{D}M_r\right)} & , \; i=D+1 \end{array}\right. \; ;$$
hence, by Eq.~(\ref{eq:ncchisq.reprod}):
\begin{equation}
\left.Y'^+ \, \right| \, \left(M_1, \, \ldots \, , M_D,M^+\right) \stackrel{d}{=} \left. Y'^+ \, \right| \, M^+ \sim \chi^2_{2\alpha^++2M^+} \, ,
\label{eq:ypplus.cond.chisq}
\end{equation}
where $\stackrel{d}{=}$ denotes equality in distribution. By Property~\ref{prope:char.prop.chisq}, $\underline{X}'$ and $Y'^+$ are independent conditionally on $\left(M_1, \, \ldots \, , M_D,M^+\right)$ and therefore, given this latter, the joint distribution of $\left(\underline{X}',Y'^+\right)$ factores into the marginal distributions of $\underline{X}'$ and $Y'^+$. That said, the proof follows by noting that the joint density of $\left. \left(\underline{X}',Y'^+\right) \, \right| \, M^+$ factores into the marginal densities of $\left. \underline{X}' \, \right| \, M^+$ and $\left. Y'^+ \, \right| \, M^+$; indeed, under Eq.~(\ref{eq:ypplus.cond.chisq}):
\begin{eqnarray*}
f_{\left.\left(\underline{X}',\, Y'^+\right)\, \right| \, M^+}\left(\underline{x},y\right) & = & \sum_{j^+ \leq M^+} \left[\, \Pr\left(\left. \, \left(M_1, \, \ldots \, ,M_D\right)=\left(j_1,\, \ldots \, ,j_D\right) \, \right| \, M^+\right) \cdot \right. \\
& \cdot & \left. f_{\left.\left(\underline{X}',\, Y'^+\right) \, \right| \, \left(\, M_1, \, \ldots \, ,M_D,M^+ \, \right)}\left(\underline{x},y; \, \left(j_1, \, \ldots \, ,j_D\right) \, \right) \, \right]=\\
& = & \sum_{j^+ \leq M^+} \left[\, \Pr\left(\left. \, \left(M_1, \, \ldots \, ,M_D\right)=\left(j_1,\, \ldots \, ,j_D\right) \, \right| \, M^+\right)\cdot \right. \\
& \cdot & \left.  f_{\left. \underline{X}' \, \right| \, \left(M_1, \, \ldots \, ,M_D,M^+\right)}\left(\underline{x},\left(j_1, \, \ldots \, ,j_D\right) \, \right) \, \right]  \cdot f_{\left. Y'^+ \, \right| \, \left(M_1, \, \ldots \, ,M_D,M^+\right)}\left(y\right)\\
& = & f_{\left.\underline{X}' \, \right| \, M^+}\left(\underline{x}\right) \cdot f_{\left.Y'^+ \, \right| \, M^+}\left(y\right) \, ,
\end{eqnarray*}
where $\left(j_1,\, \ldots \, ,j_D\right) \in \mathbb{N}_0^{\, D}$, $j^+=\sum_{i=1}^{D}j_i$ and the conditional density $f_{\left. \underline{X}' \, \right| \, M^+}$ of $\underline{X}'$ given $M^+$ is made explicit in Eq.~(\ref{eq:ncdir.cond.distr.mplus}).
\end{proof}

The approach we have just discussed paves the way towards gaining new results about the model under consideration.

Specifically, an attractive stochastic representation of this latter is first established. More precisely, like a Non-central Chi-Squared random variable in Eq.~(\ref{eq:sumrepres.ncchisq}), a Non-central Dirichlet random vector can be represented as a convex linear combination with random weights of a central component and a purely non-central one; a random fraction of that is distributed according to a Dirichlet whereas the remaining part is of a Purely Non-central Dirichlet type.

% **************************************************************************************************************************************************
% ********************************** PROPOSITION 3.3: REPRESENTATION IN TERMS OF STOCHASTIC LINEAR COMBINATION *************************************
% **************************************************************************************************************************************************
\begin{proposition}[Representation in terms of stochastic convex linear combination]
\label{propo:rappr.clc.ncdir}
Let $\underline{X}' \sim \mbox{\normalfont{NcDir}}^{\, D}\left(\underline{\alpha},\underline{\lambda}\right)$ and $\underline{M}=\left(M_1,\, \ldots \, ,M_{D+1}\right) \sim \mbox{\normalfont{Multi-Poisson}}^{\, D+1}(\underline{\lambda} \, / \, 2)$ with $M^+=\sum_{i=1}^{D+1}M_i$. Then:
\begin{equation}
\underline{X}'=X'_2 \, \underline{X} +\left(1-X'_2\right) \underline{X}'_{pnc} \; ,
\label{eq:rappr.clc.ncdir}
\end{equation}
\begin{itemize}
\item[i)] where $\underline{X}$, $(X'_2,\underline{X}'_{pnc})$ are mutually independent and $\underline{X} \sim \mbox{\normalfont{Dir}}^{\, D}\left(\underline{\alpha}\right)$, 
\item[ii)] $(X'_2, \underline{X}'_{pnc})$ are conditionally independent given $M^+$, $\left.X'_2 \, \right| \, M^+ \sim \mbox{\normalfont{Beta}}\left(\alpha^+,M^+\right)$ and the density of $\left. \underline{X}'_{pnc} \, \right| \, M^+$ is the special case of Eq.~(\ref{eq:ncdir.cond.distr.mplus}) where $\underline{\alpha}=\underline{0}$.
\end{itemize}
\end{proposition}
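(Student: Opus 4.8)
The plan is to construct the representation directly from the additive central/purely-non-central splitting of the defining variates in Eq.~(\ref{eq:sumrepres.ncchisq}). First I would write, for each $i=1,\ldots,D+1$, $Y'_i=Y_i+Y'_{i,pnc}$ with the central parts $Y_i\stackrel{\bot}{\sim}\chi^2_{2\alpha_i}$ and the purely non-central parts satisfying $Y'_{i,pnc}\,|\,M_i\sim\chi^2_{2M_i}$, the whole central family being independent of the whole non-central family. Aggregating, I set $Y^+=\sum_{i=1}^{D+1}Y_i$, $Y'^+_{pnc}=\sum_{i=1}^{D+1}Y'_{i,pnc}$ and $Y'^+=Y^++Y'^+_{pnc}$, and I define $\underline{X}=(Y_1/Y^+,\ldots,Y_D/Y^+)$, $\underline{X}'_{pnc}=(Y'_{1,pnc}/Y'^+_{pnc},\ldots,Y'_{D,pnc}/Y'^+_{pnc})$ together with the scalar weight $X'_2=Y^+/Y'^+$, so that $1-X'_2=Y'^+_{pnc}/Y'^+$. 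Decomposing each coordinate as $X'_i=Y'_i/Y'^+=(Y_i+Y'_{i,pnc})/Y'^+$ and factoring $Y^+$ out of the first summand and $Y'^+_{pnc}$ out of the second reproduces Eq.~(\ref{eq:rappr.clc.ncdir}); moreover $\underline{X}\sim\mbox{Dir}^D(\underline{\alpha})$ by Eq.~(\ref{eq:dir.def}).

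For part~(i) I would argue that $\underline{X}$ is a measurable function of the central family alone, while $(X'_2,\underline{X}'_{pnc})$ is a function of $Y^+$ together with the non-central family. Since the central and non-central families are independent, the pair $(\underline{X},Y^+)$, being a function of the central family, is independent of the non-central family; Property~\ref{prope:char.prop.chisq} additionally gives $\underline{X}\,\bot\,Y^+$. Chaining these two facts yields $\underline{X}\,\bot\,(Y^+,\mbox{non-central family})$, and hence $\underline{X}$ is independent of every function of that pair, in particular of $(X'_2,\underline{X}'_{pnc})$.

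For part~(ii) I would first observe that $\underline{X}'_{pnc}$ is itself of Non-central Dirichlet form with shape vector $\underline{\alpha}=\underline{0}$, because $Y'_{i,pnc}\,|\,M_i\sim\chi^2_{2M_i}$ with $M_i\sim\mbox{Poisson}(\lambda_i/2)$; applying Proposition~\ref{propo:ncdir.cond.distr.mplus} to this sub-model identifies the density of $\underline{X}'_{pnc}\,|\,M^+$ as the $\underline{\alpha}=\underline{0}$ instance of Eq.~(\ref{eq:ncdir.cond.distr.mplus}). Next, conditionally on $M^+$ one has $Y^+\sim\chi^2_{2\alpha^+}$ and $Y'^+_{pnc}\sim\chi^2_{2M^+}$ (the latter by the reproductivity of Eq.~(\ref{eq:ncchisq.reprod}) after conditioning on $\underline{M}$), independently; the Beta special case ($D=1$) of the Dirichlet construction in Eq.~(\ref{eq:dir.def}) then gives $X'_2\,|\,M^+\sim\mbox{Beta}(\alpha^+,M^+)$. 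Finally, the conditional independence of $X'_2$ and $\underline{X}'_{pnc}$ given $M^+$ follows by replaying the argument of Proposition~\ref{propo:ncdir.condit.indep}: given $M^+$ one has $\underline{X}'_{pnc}\,\bot\,Y'^+_{pnc}$, while $Y^+$ is independent of the entire non-central family, so $\underline{X}'_{pnc}$ is independent of $(Y^+,Y'^+_{pnc})$ given $M^+$ and therefore of $X'_2=Y^+/(Y^++Y'^+_{pnc})$.

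The step I expect to be most delicate is the unconditional independence in part~(i). Property~\ref{prope:char.prop.chisq} only secures $\underline{X}\,\bot\,Y^+$ \emph{inside} the central block, whereas the weight $X'_2$ deliberately entangles the central scalar $Y^+$ with the non-central sum $Y'^+_{pnc}$. The care required is to upgrade the two separate facts, $\underline{X}\,\bot\,Y^+$ and central~$\bot$~non-central, into independence of $\underline{X}$ from the joint object $(Y^+,\mbox{non-central family})$; this chaining is where an explicit factorization-of-laws argument, rather than a direct appeal to any single quoted property, is needed.
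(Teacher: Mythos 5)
Your construction of the decomposition and your proof of part (i) coincide with the paper's: the same splitting $Y'_i=Y_i+\sum_{j=1}^{M_i}F_j$ from Eq.~(\ref{eq:sumrepres.ncchisq}), the same definitions of $X'_2$, $\underline{X}$ and $\underline{X}'_{pnc}$, and the same independence argument --- the paper simply states that $(X'_2,\underline{X}'_{pnc})$ is a function of $(Y^+,\underline{M},\{F_j\})$, which is independent of $\underline{X}$ because $Y^+$ is by Property~\ref{prope:char.prop.chisq}; your explicit chaining of $\underline{X}\,\bot\,Y^+$ with the independence of the central and non-central blocks is exactly the factorization that sentence leaves implicit, and it is correct. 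In part (ii) you take a genuinely different route. The paper works conditionally on the full vector $\underline{M}$, records $X'_2\,|\,\underline{M}\sim\mbox{\normalfont{Beta}}\left(\alpha^+,M^+\right)$ and $\underline{X}'_{pnc}\,|\,\underline{M}\sim\mbox{\normalfont{Dir}}^{\,D}\left(\underline{M}\right)$ together with conditional independence given $\underline{M}$, and then explicitly sums the joint conditional density over the Multinomial law of Eq.~(\ref{eq:multipois.cond.sum}); the factorization given $M^+$ emerges because the Beta density depends on $\underline{M}$ only through $M^+$ and so pulls out of the sum. You instead argue structurally: you obtain the law of $X'_2\,|\,M^+$ directly from the conditional $\chi^{2}_{2\alpha^+}$ versus $\chi^{2}_{2M^+}$ construction, identify the density of $\underline{X}'_{pnc}\,|\,M^+$ by applying Proposition~\ref{propo:ncdir.cond.distr.mplus} to the $\underline{\alpha}=\underline{0}$ sub-model, and deduce the conditional independence by replaying Proposition~\ref{propo:ncdir.condit.indep} on the purely non-central block and chaining with the independence of $Y^+$ from $(\underline{M},\{F_j\})$. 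Both arguments pivot on the same key fact --- that the conditional law of $X'_2$ given $\underline{M}$ depends on $\underline{M}$ only through $M^+$ --- but yours avoids the explicit density summation and makes the reuse of the two earlier propositions transparent; its only cost is that invoking Property~\ref{prope:char.prop.chisq} inside the $\underline{\alpha}=\underline{0}$ sub-model strains Lukacs' nondegeneracy hypothesis when some $M_i=0$ (or $M^+=0$), so you should add the one-line remark that degenerate coordinates are trivially independent of everything --- a caveat the paper itself handles no more carefully, via its convention that $\underline{X}'_{pnc}=\underline{0}$ when $M^+=0$.
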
 
\begin{proof}
In the notation of Eq.~(\ref{eq:ncdir.def}), by Eqs.~(\ref{eq:sumrepres.ncchisq}) and~(\ref{eq:ncchisq.reprod}), one has:
\begin{eqnarray}
\lefteqn{\underline{X}'=\left(\frac{Y'_i}{Y'^+}\right)_{i=1, \, \ldots \, ,D}=\left(\frac{Y_i+\sum_{j=1}^{M_i}F_j}{Y^{+}+\sum_{j=1}^{M^+}F_j}\right)_{i=1, \, \ldots \, ,D}=}\nonumber \\
& = & \left(\frac{Y_i}{Y^++\sum_{j=1}^{M^+}F_j}\right)_{i=1, \, \ldots \, ,D}+\left(\frac{\sum_{j=1}^{M_i}F_j}{Y^++\sum_{j=1}^{M^+}F_j}\right)_{i=1, \, \ldots \, ,D} \, .
\label{eq:rappr.ncdir.1}
\end{eqnarray}
The first term on the right-hand side of Eq.~(\ref{eq:rappr.ncdir.1}) can be restated as
$$
\left(\frac{Y_i}{Y^++\sum_{j=1}^{M^+}F_j}\right)_{i=1, \, \ldots \, ,D}=\frac{Y^+}{Y^++\sum_{j=1}^{M^+}F_j} \cdot \left(\frac{Y_i}{Y^+}\right)_{i=1, \, \ldots \, ,D} \, ;
$$
with respect to the second term, we similarly have:
$$
\left(\frac{\sum_{j=1}^{M_i}F_j}{Y^++\sum_{j=1}^{M^+}F_j}\right)_{i=1, \, \ldots \, ,D}=
\frac{\sum_{j=1}^{M^+}F_j}{Y^++\sum_{j=1}^{M^+}F_j} \cdot \left(\frac{\sum_{j=1}^{M_i}F_j}{\sum_{j=1}^{M^+} F_j}\right)_{i=1, \, \ldots \, ,D} \, ,
$$
which is meaningful provided we set:
$$
\left(\frac{\sum_{j=1}^{M_i}F_j}{\sum_{j=1}^{M^+} F_j}\right)_{i=1, \, \ldots \, ,D}=\underline{0} \qquad \mbox{if }M_i=0, \quad \forall \, i=1, \, \ldots \, , D+1 \; .
$$
Finally, by setting
\begin{equation}
X'_2=\frac{Y^+}{Y^++\sum_{j=1}^{M^+}F_j}, \quad \underline{X}=\left(\frac{Y_i}{Y^+}\right)_{i=1, \, \ldots \, ,D}, \quad \underline{X}'_{pnc}=\left(\frac{\sum_{j=1}^{M_i}F_j}{\sum_{j=1}^{M^+} F_j}\right)_{i=1, \, \ldots \, ,D} \, ,
\label{eq:rappr.ncdir.2}
\end{equation}
the decomposition in Eq.~(\ref{eq:rappr.clc.ncdir}) is established. Now let us consider the random vector $(\underline{X},X'_2,\underline{X}'_{pnc})$. In light of Eq.~(\ref{eq:rappr.ncdir.2}), the marginal random vector $(X'_2,\underline{X}'_{pnc})$ is a function of $(Y^+,\underline{M},\left\{F_j\right\})$; moreover, this latter is independent of $\underline{X}$ as $Y^+$ is independent of $\underline{X}$ by virtue of Property~\ref{prope:char.prop.chisq}. Therefore, $\underline{X}$ and $\left(X'_2,\underline{X}'_{pnc}\right)$ are mutually independent and $\underline{X} \sim \mbox{\normalfont{Dir}}^{\, D}\left(\underline{\alpha}\right)$; hence, result \textit{i)} is proved. In order to prove result \textit{ii)}, observe first that:
\begin{eqnarray}
\left.X'_2 \, \right| \, \underline{M} & = & \left.\frac{Y^+}{Y^++\sum_{j=1}^{M^+}F_j}\right| \, \underline{M} \; \sim \; \mbox{Beta}\left(\alpha^+,M^+\right) \, , \nonumber \\
\left.\underline{X}'_{pnc} \, \right| \, \underline{M} & = & \left. \left(\frac{\sum_{j=1}^{M_i}F_j}{\sum_{j=1}^{M^+} F_j}\right)_{i=1, \, \ldots \, ,D} \right|\,  \underline{M} \; \sim \; \mbox{Dir}^{\, D} \left(\underline{M}\right) \, ;
\label{eq:rappr.ncdir.3}
\end{eqnarray}
moreover, $X'_2$ and $\underline{X}'_{pnc}$ are conditionally independent given $\underline{M}$. That said, the proof of \textit{ii)} follows by noting that the joint density function of $(X'_2,\underline{X}'_{pnc}) \, | 	\, M^+$ factores into the marginal densities of $\left.X'_2 \, \right| \, M^+$ and $\left. \underline{X}'_{pnc} \, \right| \, M^+$, where, in light of Eq.~(\ref{eq:rappr.ncdir.3}), by Eq.~(\ref{eq:ncdir.mixt.repres}), the conditional density $f_{\left. \underline{X}'_{pnc} \, \right| \, M^+}$ of $\underline{X}'_{pnc}$ given $M^+$ corresponds to Eq.~(\ref{eq:ncdir.cond.distr.mplus}) where $\underline{\alpha}$ is set equal to $\underline{0}$. Under Eq.~(\ref{eq:multipois.cond.sum}), one can obtain:
\begin{eqnarray*}
f_{\left.\left(X'_2,\, \underline{X}'_{pnc}\right)\, \right| \, M^+}\left(x_2,\underline{x}\right) & = & \sum_{j^+ \leq M^+} \left[\, \Pr\left(\left. \, \left(M_1, \, \ldots \, ,M_D\right)=\left(j_1,\, \ldots \, ,j_D\right) \, \right| \, M^+\right) \cdot \right. \\
& \cdot & \left. f_{\left.\left(X'_2,\, \underline{X}'_{pnc}\right) \, \right| \, \underline{M}} \left(x_2,\underline{x}\right) \, \right]=\\
& = & \sum_{j^+ \leq M^+} \left[\, \Pr\left(\left. \, \left(M_1, \, \ldots \, ,M_D\right)=\left(j_1,\, \ldots \, ,j_D\right) \, \right| \, M^+\right)\cdot \right. \\
& \cdot & \left.  f_{\left. X'_2 \, \right| \, \underline{M}} \left(x_2\right) \cdot f_{\left. \underline{X}'_{pnc} \, \right| \, \underline{M}}\left(\underline{x}\right) \, \right] =\\
& = & \sum_{j^+ \leq M^+} \left[\, \Pr\left(\left. \, \left(M_1, \, \ldots \, ,M_D\right)=\left(j_1,\, \ldots \, ,j_D\right) \, \right| \, M^+\right)\cdot \right. \\
& \cdot & \left. \mbox{Dir}^{\, D}\left(\underline{x};j_1,\, \ldots \, ,j_D,M^+-j^+\right) \right] \cdot \mbox{Beta}\left(x_2;\alpha^+,M^+\right)=\\
& = & f_{\left. \underline{X}'_{pnc} \, \right| \, M^+}\left(\underline{x}\right) \cdot f_{\left. X'_2 \, \right| \, M^+}\left(x_2\right) \, ,
\end{eqnarray*}
where $\left(j_1,\, \ldots \, ,j_D\right) \in \mathbb{N}_0^{\, D}$ and $j^+=\sum_{i=1}^{D}j_i$.
\end{proof}

Then, a surprisingly simple expression for the mixed raw moments of the Non-central Dirichlet distribution is derived. In this regard, for the sake of simplicity we shall focus on the case $D=2$; however, by Eq.~(\ref{eq:ncdir.bidim.marg}), such result also holds true for any two-dimensional marginal of the NcDir$^{\, D}$ model with $D>2$. 

By analogy with the form of the density in Eq.~(\ref{eq:ncdir.dens}) and by Eq.~(\ref{eq:dir.mixr1r2mom}), for every $r_1, \, r_2 \in \mathbb{N}$, the mixed raw moment of order $(r_1, r_2)$ of $\left(X'_1,X'_2\right) \sim \mbox{\normalfont{NcDir}}^{\, 2}\left(\alpha_1,\alpha_2,\alpha_3,\lambda_1,\lambda_2,\lambda_3\right)$ can be stated as
\begin{eqnarray}
\lefteqn{\mathbb{E}\left[\left(X_1'\right)^{ \, r_1} \left(X_2'\right)^{ \, r_2} \, \right]=  \qquad \qquad \qquad \qquad \qquad \qquad \left\{\begin{array}{l} r^+=r_1+r_2 \\ j^+=j_1+j_2+j_3 \end{array}\right. }\nonumber \\
& = &  \sum_{j_1, \, j_2, \, j_3 \, = \, 0}^{+\infty} \left\{ \, \Pr\left[ \; \left(M_1,M_2,M_3\right)=\left(j_1,j_2,j_3\right) \; \right] \cdot \frac{\left(\alpha_1+j_1\right)_{r_1} \, \left(\alpha_2+j_2\right)_{r_2}}{\left(\alpha^+ +j^+\right)_{r^+}} \, \right\}  \, ,
\label{eq:ncdir.mixr1r2mom.def}
\end{eqnarray}
i.e. as the multiple infinite series of the mixed raw moments of order $(r_1,r_2)$ of the $\mbox{Dir}^{\, 2}\left(\alpha_1+j_1,\alpha_2+j_2,\alpha_3+j_3\right)$ distributions, $j_i \in \mathbb{N} \cup \{0\}$, $i=1,2,3$, weighted by the $\mbox{Multi-Poisson}^{\, 3}\left(\frac{\lambda_1}{2},\frac{\lambda_2}{2},\frac{\lambda_3}{2}\right)$ probabilities. Moreover, by Eq.~(\ref{eq:poch.symb.sum}), Eq.~(\ref{eq:ncdir.mixr1r2mom.def}) can be equivalently expressed as the following doubly infinite sum of generalized hypergeometric functions with two parameters both at numerator and denominator:
\begin{eqnarray}
\lefteqn{\mathbb{E}\left[\left(X_1'\right)^{ \, r_1} \left(X_2'\right)^{ \, r_2} \, \right]=}\nonumber \\
& = & \frac{\left(\alpha_1\right)_{r_1} \left(\alpha_2\right)_{r_2}}{\left(\alpha^+\right)_{r^+}} \, e^{-\frac{\lambda^+}{2}} \, \sum_{j_3 \, = \, 0}^{+\infty} \frac{\left(\alpha^+\right)_{j_3}}{\left(\alpha^++r^+\right)_{j_3}} \frac{\left(\frac{\lambda_3}{2}\right)^{j_3}}{j_3!} \sum_{j_2 \, = \, 0}^{+\infty} \frac{\left(\alpha_2+r_2\right)_{j_2} \left(\alpha^++j_3\right)_{j_2}}{\left(\alpha_2\right)_{j_2} \left(\alpha^++r^++j_3\right)_{j_2}} \frac{\left(\frac{\lambda_2}{2}\right)^{j_2}}{j_2!} \nonumber \\
& \cdot & \, _2^{\, }F^{\, }_2\left(\alpha_1+r_1,\alpha^++j_2+j_3;\alpha_1,\alpha^++r^++j_2+j_3;\frac{\lambda_1}{2}\right) \, .
\label{eq:ncdir.mixr1r2mom.def2}
\end{eqnarray}
Albeit with some difficulties, Eq.~(\ref{eq:ncdir.mixr1r2mom.def2}), being less complicated than Eq.~(\ref{eq:ncdir.mixr1r2mom.def}), can be used to compute the quantity of interest in any statistical package where the function $_p^{\, }F^{\, }_q$ is already implemented. This formula is computationally cumbersome anyway. Hence, we shall provide a new interesting result that allows the computation at issue to be reduced from the doubly infinite sum in Eq.~(\ref{eq:ncdir.mixr1r2mom.def2}) to an easily handeable form given by the following doubly finite sum.

% **************************************************************************************************************************************************
% ***************************************** PROPOSITION 3.4: MIXED RAW MOMENTS OF THE BIVARIATE DISTRIBUTION ***************************************
% **************************************************************************************************************************************************
\begin{proposition}[Mixed raw moments of the bivariate distribution]
\label{propo:ncdir.momr1r2}
The mixed raw moment of order $(r_1,r_2)$ of $\left(X'_1,X'_2\right) \sim \mbox{\normalfont{NcDir}}^{\, 2}\left(\alpha_1,\alpha_2,\alpha_3,\lambda_1,\lambda_2,\lambda_3\right)$ admits the following expression:
\begin{eqnarray}
\lefteqn{\mathbb{E}\left[\left(X'_1\right)^{r_1} \left(X'_2\right)^{r_2}\right]= \frac{\left(\alpha_1\right)_{r_1}  \left(\alpha_2\right)_{r_2}}{\left(\alpha^+\right)_{r^+}}  \; e^{-\frac{\lambda^+}{2}} \cdot \qquad \quad \left\{\begin{array}{l} r_1,r_2 \in \mathbb{N}, \, r^+=r_1+r_2 \\ j^+=j_1+j_2 \end{array}\right.} \nonumber\\
& \cdot & \sum_{j_1=0}^{r_1} \sum_{j_2=0}^{r_2} \frac{{r_1 \choose j_1} {r_2 \choose j_2} \left(\alpha^+\right)_{j^+} \left(\frac{\lambda_1}{2}\right)^{j_1} \left(\frac{\lambda_2}{2}\right)^{j_2}}{\left(\alpha^++r^+\right)_{j^+} \left(\alpha_1\right)_{j_1} \left(\alpha_2\right)_{j_2}} \, _1F_1\left(\alpha^++j^+;\alpha^++r^++j^+;\frac{\lambda^+}{2}\right)
\label{eq:ncdir.momr1r2}
\end{eqnarray}
\end{proposition}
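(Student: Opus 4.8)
The plan is to start from the mixture form of the moment in Eq.~(\ref{eq:ncdir.mixr1r2mom.def}) --- rather than from its partially resummed version in Eq.~(\ref{eq:ncdir.mixr1r2mom.def2}) --- and to reduce it to Eq.~(\ref{eq:ncdir.momr1r2}) by elementary manipulations of Pochhammer symbols together with the multinomial theorem. First I would strip off the constant factor. Writing the $\mbox{Multi-Poisson}^{\,3}$ weight as $e^{-\lambda^+/2}\prod_{i=1}^{3}\frac{(\lambda_i/2)^{n_i}}{n_i!}$ (I rename the summation indices $n_1,n_2,n_3$, with $n^+=n_1+n_2+n_3$) and applying Eq.~(\ref{eq:poch.symb.sum}) to each of the three Pochhammer symbols in the summand, i.e. $(\alpha_1+n_1)_{r_1}=(\alpha_1)_{r_1}(\alpha_1+r_1)_{n_1}/(\alpha_1)_{n_1}$, the analogue for $(\alpha_2+n_2)_{r_2}$, and $1/(\alpha^++n^+)_{r^+}=(\alpha^+)_{n^+}/[(\alpha^+)_{r^+}(\alpha^++r^+)_{n^+}]$, the constant $\frac{(\alpha_1)_{r_1}(\alpha_2)_{r_2}}{(\alpha^+)_{r^+}}e^{-\lambda^+/2}$ factors out and the problem reduces to evaluating the triple series
\[ S=\sum_{n_1,n_2,n_3\ge0}\frac{(\alpha_1+r_1)_{n_1}}{(\alpha_1)_{n_1}}\,\frac{(\alpha_2+r_2)_{n_2}}{(\alpha_2)_{n_2}}\,\frac{(\alpha^+)_{n^+}}{(\alpha^++r^+)_{n^+}}\prod_{i=1}^{3}\frac{(\lambda_i/2)^{n_i}}{n_i!}. \]
This $S$ is exactly the object that, after recognising its $n_1$-sum as a ${}_2F_2$ via Eq.~(\ref{eq:poch.symb.sum}), produces Eq.~(\ref{eq:ncdir.mixr1r2mom.def2}); the point is to resum it differently.

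The heart of the argument, and the step I expect to be the main obstacle, is to collapse the two infinite sums over $n_1$ and $n_2$ into finite ones. For this I would use the finite expansion
\[ \frac{(\alpha+r)_{n}}{(\alpha)_{n}\,n!}=\sum_{j=0}^{\min(n,r)}\frac{1}{(\alpha)_{j}\,(n-j)!}\binom{r}{j}, \]
valid for every integer $n\ge0$ and every $r\in\mathbb{N}$. This is precisely the coefficient-of-$x^{n}$ identity behind Kummer's transformation ${}_1F_1(\alpha+r;\alpha;x)=e^{x}\,{}_1F_1(-r;\alpha;-x)=e^{x}\sum_{j=0}^{r}\binom{r}{j}x^{j}/(\alpha)_{j}$; equivalently it is the Chu--Vandermonde summation, and it may alternatively be reached through the contiguous relations in Eq.~(\ref{eq:f11.recurr.relat}). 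Applying this expansion to both the $n_1$- and the $n_2$-dependent factors $\frac{(\alpha_i+r_i)_{n_i}}{(\alpha_i)_{n_i}\,n_i!}$ of $S$, interchanging the (absolutely convergent) sums, and setting $k_1=n_1-j_1$, $k_2=n_2-j_2$, $k_3=n_3$, the outer indices are forced into the finite ranges $0\le j_1\le r_1$ and $0\le j_2\le r_2$, the binomials $\binom{r_1}{j_1}\binom{r_2}{j_2}$ appear, and the numerator Pochhammer $(\alpha^+)_{n^+}$ becomes $(\alpha^+)_{j^++k}$ with $j^+=j_1+j_2$ and $k=k_1+k_2+k_3$.

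It remains to re-couple the residual indices $k_1,k_2,k_3$, which now enter only through $k$ in the Pochhammer ratio. Summing at fixed $k$ and invoking the multinomial theorem gives $\sum_{k_1+k_2+k_3=k}\prod_{i=1}^{3}(\lambda_i/2)^{k_i}/k_i!=(\lambda^+/2)^{k}/k!$, so the residual triple sum collapses to $\sum_{k\ge0}\frac{(\alpha^+)_{j^++k}}{(\alpha^++r^+)_{j^++k}}\frac{(\lambda^+/2)^{k}}{k!}$. A final application of Eq.~(\ref{eq:poch.symb.sum}) extracts the factor $(\alpha^+)_{j^+}/(\alpha^++r^+)_{j^+}$ and leaves $\sum_{k\ge0}\frac{(\alpha^++j^+)_{k}}{(\alpha^++r^++j^+)_{k}}\frac{(\lambda^+/2)^{k}}{k!}$, which by Eq.~(\ref{eq:f11}) is exactly ${}_1F_1(\alpha^++j^+;\alpha^++r^++j^+;\lambda^+/2)$. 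Collecting the surviving $\binom{r_1}{j_1}\binom{r_2}{j_2}$, the factors $(\lambda_1/2)^{j_1}(\lambda_2/2)^{j_2}/[(\alpha_1)_{j_1}(\alpha_2)_{j_2}]$ and $(\alpha^+)_{j^+}/(\alpha^++r^+)_{j^+}$, and reinstating the prefactor $\frac{(\alpha_1)_{r_1}(\alpha_2)_{r_2}}{(\alpha^+)_{r^+}}e^{-\lambda^+/2}$, reproduces Eq.~(\ref{eq:ncdir.momr1r2}) term by term, completing the proof.
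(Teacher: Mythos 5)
Your proof is correct, but it takes a genuinely different route from the paper's. The paper's proof leans on its new conditional machinery: it conditions on $M^+$, uses the conditional density of Proposition~\ref{propo:ncdir.cond.distr.mplus} to write the conditional moment as a Multinomial expectation of products of Pochhammer symbols, evaluates $\mathbb{E}\left[\left.\left(L_1+1\right)_{j_1}\left(L_2+1\right)_{j_2}\right| M^+\right]$ by two successive applications of Ljunggren's identity, Eq.~(\ref{eq:ljunggren.id}), and only then deconditions via iterated expectations over $M^+ \sim \mbox{Poisson}\left(\lambda^+/2\right)$, needing one further binomial--Pochhammer resummation (Eq.~(\ref{eq:mom.dim5})) to reach Eq.~(\ref{eq:ncdir.momr1r2}). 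You instead stay with the unconditional Multi-Poisson mixture in Eq.~(\ref{eq:ncdir.mixr1r2mom.def}) and replace Ljunggren's identity by the Chu--Vandermonde/Kummer finite expansion $\left(\alpha+r\right)_n/\left[\left(\alpha\right)_n \, n!\right]=\sum_{j=0}^{\min(n,r)}\binom{r}{j}\left[\left(\alpha\right)_j\left(n-j\right)!\right]^{-1}$, which collapses both infinite sums at once; the multinomial theorem then recombines the residual Poisson tails into the single $\left(\lambda^+/2\right)$-series that Eq.~(\ref{eq:f11}) identifies as the $_1F_1$. Your key lemma is correct (it is precisely $_2F_1\left(-n,-r;\alpha;1\right)=\left(\alpha+r\right)_n/\left(\alpha\right)_n$, i.e.\ Chu--Vandermonde), the interchange of summations is legitimate because every term is nonnegative, and the bookkeeping reproduces Eq.~(\ref{eq:ncdir.momr1r2}) term by term. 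As for what each approach buys: the paper's derivation showcases the conditional-given-$M^+$ theme of the paper and recycles Proposition~\ref{propo:ncdir.cond.distr.mplus}, whereas yours is shorter and self-contained --- it needs neither the conditional density nor Gould's combinatorial identity --- and it makes structurally transparent why only the aggregate $\lambda^+$ survives inside the $_1F_1$ (the Poisson weights recombine) while $\lambda_1,\lambda_2$ enter only through the finite binomial sums. One minor caveat: your aside that the expansion ``may alternatively be reached through'' the contiguous relations in Eq.~(\ref{eq:f11.recurr.relat}) is plausible but undeveloped; nothing hinges on it, since you already justify the lemma twice, via Kummer's transformation and via Chu--Vandermonde.
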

\begin{proof}
Let $\left(L_1,L_2\right)$ have conditional distribution given $M^+$ of $\mbox{Multinomial}^{\, 2}\left(M^+,\theta_1,\theta_2\right)$ type with $\theta_i=\lambda_i \, / \, \lambda^+$, $i=1,2$. By Eq.~(\ref{eq:ncdir.cond.distr.mplus}), one has:
\begin{eqnarray}
\lefteqn{\mathbb{E}\left[\left. \left(X'_1\right)^{r_1} \left(X'_2\right)^{r_2} \right| \, M^+\right]=}\nonumber \\
& = & \int_{\left(x_1,\, x_2\right) \, \in \, \mathcal{S}^2} x_1^{\, r_1} x_2^{\, r_2} \cdot f_{\left.\left(X'_1,\, X'_2\right) \, \right| \, M^+}\left(x_1,x_2;\alpha_1,\alpha_2,\alpha_3,\lambda_1,\lambda_2,\lambda_3\right) \, dx_1 \, dx_2=\nonumber \\
& = & \sum_{l_1=0}^{M^+} \sum_{l_2=0}^{M^+- \, l_1} \frac{\left(\alpha_1+l_1\right)_{r_1} \left(\alpha_2+l_2\right)_{r_2} }{\left(\alpha^++M^+\right)_{r^+}} \, {M^+ \choose l_1 \; l_2} \; \theta_1^{\, l_1} \,  \theta_2^{\, l_2} \left(1-\theta_1-\theta_2\right)^{M^+- \, l_1-l_2} = \nonumber \\
& = & \frac{\mathbb{E}\left[\left. \left(\alpha_1+L_1\right)_{r_1} \left(\alpha_2+L_2\right)_{r_2} \right| \, M^+\right]}{\left(\alpha^++M^+\right)_{r^+}} \, ;
\label{eq:mom.dim1}
\end{eqnarray}
in light of Eq.~(\ref{eq:poch.symb.binom}):
$$\left(\alpha_i+L_i\right)_{r_i}=\left[\left(\alpha_i-1\right)+\left(L_i+1\right)\right]_{r_i}=\sum_{j_i=0}^{r_i} {r_i \choose j_i} \left(\alpha_i-1\right)_{r_i-j_i} \left(L_i+1\right)_{j_i} \, , \quad \; i=1,2 \, ,$$
so that one obtains:
\begin{eqnarray}
\lefteqn{\mathbb{E}\left[\left. \left(\alpha_1+L_1\right)_{r_1} \left(\alpha_2+L_2\right)_{r_2} \right| \, M^+\right]=} \nonumber \\
& = & \sum_{j_1=0}^{r_1} \sum_{j_2=0}^{r_2} {r_1 \choose j_1} {r_2 \choose j_2} \left(\alpha_1-1\right)_{r_1-j_1} \left(\alpha_2-1\right)_{r_2-j_2} \mathbb{E}\left[\left. \left(L_1+1\right)_{j_1} \left(L_2+1\right)_{j_2} \right| \, M^+\right] \, , \nonumber \\
\label{eq:mom.dim1b}
\end{eqnarray}
where:
\begin{eqnarray}
\lefteqn{\mathbb{E}\left[\left. \left(L_1+1\right)_{j_1} \left(L_2+1\right)_{j_2} \right| \, M^+\right]=} \nonumber \\
& = & \sum_{l_1=0}^{M^+} \sum_{l_2=0}^{M^+-\, l_1} \left(l_1+1\right)_{j_1} \left(l_2+1\right)_{j_2} \, {M^+ \choose l_1 \; l_2} \; \theta_1^{\, l_1} \,  \theta_2^{\, l_2} \left(1-\theta_1-\theta_2\right)^{M^+- \, l_1- \, l_2} \, .
\label{eq:mom.dimb2}
\end{eqnarray}
By Eq.~(\ref{eq:poch.symb}), for every $j_i=0,\, \ldots \, ,r_i$, $i=1,2$:
\begin{equation}
\left(l_i+1\right)_{j_i}=\frac{\Gamma\left(l_i+j_i+1\right)}{\Gamma\left(l_i+1\right)}=\frac{\left(l_i+j_i\right) !}{l_i !}={l_i+j_i \choose l_i} \, j_i! \; ;
\label{eq:mom.dimb3}
\end{equation}
under Eq.~(\ref{eq:mom.dimb3}), Eq.~(\ref{eq:mom.dimb2}) can be thus rewritten as follows: 
\begin{eqnarray}
\lefteqn{\mathbb{E}\left[\left. \left(L_1+1\right)_{j_1} \left(L_2+1\right)_{j_2} \right| \, M^+\right]=j_1 ! \; j_2 ! \, \sum_{l_1=0}^{M^+} {l_1 +j_1 \choose l_1} {M^+ \choose l_1} \, \theta_1^{\, l_1}  \cdot} \nonumber \\
& \cdot & \sum_{l_2=0}^{M^+-\, l_1} {l_2 +j_2 \choose l_2} {M^+-\, l_1 \choose l_2} \left[\left(1-\theta_1\right)-\theta_2\right]^{\left(M^+-\, l_1\right)- \, l_2} \theta_2^{\, l_2}  \, .
\label{eq:mom.dim1c}
\end{eqnarray}
In carrying out the prove, reference must be made to Ljunggren's Identity, namely
\begin{equation}
\sum_{k=0}^{n} {\alpha+k \choose k} {n \choose k} \left(x-y\right)^{n-k} y^k=\sum_{k=0}^{n} {\alpha \choose k} {n \choose k} \, x^{n-k} \, y^k \, , 
\label{eq:ljunggren.id}
\end{equation}
which is (3.18) in \cite{Gou72}. More precisely, by the special case of Eq.~(\ref{eq:ljunggren.id}) where $k=l_2$, $n=M^+- \, l_1$, $\alpha=j_2$, $x=1-\theta_1$, $y=\theta_2$, the final sum in Eq.~(\ref{eq:mom.dim1c}) can be equivalently expressed as
$$
\sum_{l_2=0}^{M^+-\, l_1} {j_2 \choose l_2} \, {M^+-\, l_1 \choose l_2} \left(1-\theta_1\right)^{M^+-\, l_1-\, l_2} \theta_2^{\, l_2} \, ,$$
%\label{eq:mom.dim2}
so that, for every $j_i=0,\, \ldots \, ,r_i$, $i=1,2$, Eq.~(\ref{eq:mom.dim1c}) can be restated as
\begin{eqnarray}
\lefteqn{\mathbb{E}\left[\left. \left(L_1+1\right)_{j_1} \left(L_2+1\right)_{j_2} \right| \, M^+\right]=} \nonumber \\
& = & j_1 ! \, j_2 ! \, \sum_{l_1=0}^{M^+} \sum_{l_2=0}^{M^+-\, l_1} {l_1+j_1 \choose l_1}  {j_2 \choose l_2} {M^+ \choose l_1 \; l_2}  \,  \left(1-\theta_1\right)^{M^+- \, l_1-\, l_2} \theta_1^{\, l_1} \,  \theta_2^{\, l_2} = \nonumber \\
& = & j_1 ! \, j_2 ! \, \sum_{l_2=0}^{M^+} {j_2 \choose l_2} \, {M^+ \choose l_2} \,  \theta_2^{\, l_2} \cdot \nonumber \\
& \cdot & \sum_{l_1=0}^{M^+-\, l_2}  {l_1+j_1 \choose l_1} \, {M^+-l_2 \choose l_1}  \left(1-\theta_1\right)^{(M^+- \, l_2)-\, l_1} \,  \theta_1^{\, l_1}   \, .
\label{eq:mom.dim2a}
\end{eqnarray}
Then, by the special case of Eq.~(\ref{eq:ljunggren.id}) where $k=l_1$, $n=M^+- \, l_2$, $\alpha=j_1$, $x=1$, $y=\theta_1$, the final sum in Eq.~(\ref{eq:mom.dim2a}) can be equivalently expressed as
$$
\sum_{l_1=0}^{M^+-\, l_2} {j_1 \choose l_1} \, {M^+-\, l_2 \choose l_1} \, \theta_1^{\, l_1}$$
and Eq.~(\ref{eq:mom.dim2a}) can be accordingly rewritten in the form of
\begin{eqnarray}
\lefteqn{\mathbb{E}\left[\left. \left(L_1+1\right)_{j_1} \left(L_2+1\right)_{j_2} \right| \, M^+\right]=} \nonumber \\
& = & j_1 ! \, j_2 ! \, \sum_{l_2=0}^{M^+} {j_2 \choose l_2} {M^+ \choose l_2} \,  \theta_2^{\, l_2} \sum_{l_1=0}^{M^+-\, l_2} {j_1 \choose l_1} {M^+-\, l_2 \choose l_1}   \,  \theta_1^{\, l_1} \, ,
\label{eq:mom.dim2b}
\end{eqnarray}
so that, by Eqs.~(\ref{eq:mom.dim1}),~(\ref{eq:mom.dim1b}) and~(\ref{eq:mom.dim2b}), conditionally on $M^+$, the mixed raw moment of interest takes on the following expression:
\begin{eqnarray}
\lefteqn{\mathbb{E}\left[\left. \left(X'_1\right)^{r_1} \left(X'_2\right)^{r_2} \right| \, M^+ \right]=}\nonumber \\ 
& = & \frac{1}{\left(\alpha^++M^+\right)_{r^+}} \sum_{j_1=0}^{r_1} \sum_{j_2=0}^{r_2} {r_1 \choose j_1} j_1 ! \, {r_2 \choose j_2} j_2 ! \left(\alpha_1-1\right)_{r_1-j_1} \left(\alpha_2-1\right)_{r_2-j_2} \cdot \nonumber \\
& \cdot & \sum_{l_2=0}^{M^+} {j_2 \choose l_2} {M^+ \choose l_2} \,  \theta_2^{\, l_2} \sum_{l_1=0}^{M^+-\, l_2} {j_1 \choose l_1} {M^+-\, l_2 \choose l_1}   \,  \theta_1^{\, l_1} \, ;
\label{eq:mom.dim2c}
\end{eqnarray}
applying the law of iterated expectations to Eq.~(\ref{eq:mom.dim2c}) finally leads to:
\begin{eqnarray}
\lefteqn{\mathbb{E}\left[\left(X'_1\right)^{r_1} \left(X'_2\right)^{r_2}\right]=}\nonumber \\ 
& = & e^{-\frac{\lambda^+}{2}} \, \sum_{j_1=0}^{r_1} \sum_{j_2=0}^{r_2} {r_1 \choose j_1} j_1 ! \, {r_2 \choose j_2} j_2 ! \left(\alpha_1-1\right)_{r_1-j_1} \left(\alpha_2-1\right)_{r_2-j_2} \cdot \nonumber \\
& \cdot & \sum_{m=0}^{+\infty} \frac{\left(\frac{\lambda^+}{2}\right)^m \Gamma\left(\alpha^++m\right)}{m ! \, \Gamma\left(\alpha^++r^++m\right)} \sum_{l_2=0}^{m} {j_2 \choose l_2} {m \choose l_2} \,  \theta_2^{\, l_2} \sum_{l_1=0}^{m-\, l_2} {j_1 \choose l_1} {m-\, l_2 \choose l_1}   \,  \theta_1^{\, l_1} \, .
\label{eq:mom.dim3}
\end{eqnarray}
Since ${j_i \choose l_i}=0$ for $l_i>j_i$, $i=1,2$, one has:
$$
l_2 \leq j_2 \leq m \; \Rightarrow \; \left\{\begin{array}{l} l_2=0, \, \ldots \, ,j_2 \\ \\ m=l_2, \, \ldots \, ,+\infty \end{array} \right. \, \quad 
l_1 \leq j_1 \leq m-l_2 \; \Rightarrow \; \left\{\begin{array}{l} l_1=0, \, \ldots \, ,j_1 \\ \\ m=l_1+l_2, \, \ldots \, ,+\infty \end{array} \right.
$$
so that Eq.~(\ref{eq:mom.dim3}) is equivalent to:
\begin{eqnarray*}
\lefteqn{\mathbb{E}\left[\left(X'_1\right)^{r_1} \left(X'_2\right)^{r_2}\right]=}\\ 
& = & e^{-\frac{\lambda^+}{2}} \, \sum_{j_1=0}^{r_1} {r_1 \choose j_1} j_1 ! \left(\alpha_1-1\right)_{r_1-j_1} \, \sum_{l_1=0}^{j_1} \frac{\theta_1^{\, l_1}}{l_1 !} \, {j_1 \choose l_1} \, \sum_{j_2=0}^{r_2} {r_2 \choose j_2} j_2 ! \left(\alpha_2-1\right)_{r_2-j_2} \cdot \\
& \cdot & \sum_{l_2=0}^{j_2} \frac{\theta_2^{\, l_2}}{l_2 !} \, {j_2 \choose l_2} \sum_{m=l_1+l_2}^{+\infty} \frac{\left(\frac{\lambda^+}{2}\right)^m  \Gamma\left(\alpha^++m\right)}{\left(m-\, l_1-\, l_2\right) ! \; \Gamma\left(\alpha^++r^++m\right)} \, ;
\end{eqnarray*}
then, by Eqs.~(\ref{eq:poch.symb}) and~(\ref{eq:f11}), setting $k=m-(l_1+l_2) \Leftrightarrow m=k+(l_1+l_2)$ yields:
\begin{eqnarray*}  
& = & e^{-\frac{\lambda^+}{2}} \, \sum_{j_1=0}^{r_1} {r_1 \choose j_1} j_1 ! \left(\alpha_1-1\right)_{r_1-j_1} \, \sum_{l_1=0}^{j_1} \frac{\left(\frac{\lambda_1}{2}\right)^{l_1} {j_1 \choose l_1}}{l_1 !}  \, \sum_{j_2=0}^{r_2} {r_2 \choose j_2} j_2 ! \left(\alpha_2-1\right)_{r_2-j_2} \cdot \\
& \cdot & \sum_{l_2=0}^{j_2} \frac{\left(\frac{\lambda_2}{2}\right)^{l_2} \, {j_2 \choose l_2} \,\Gamma\left(\alpha^++l_1+l_2\right)}{l_2 ! \; \Gamma\left(\alpha^++r^++l_1+l_2\right)} \,  \sum_{k=0}^{+\infty} \frac{\left(\frac{\lambda^+}{2}\right)^k  \left(\alpha^++l_1+l_2\right)_k}{k ! \, \left(\alpha^++r^++l_1+l_2\right)_k}= \\
& = & e^{-\frac{\lambda^+}{2}} \, \sum_{j_1=0}^{r_1} {r_1 \choose j_1} j_1 ! \left(\alpha_1-1\right)_{r_1-j_1} \, \sum_{l_1=0}^{j_1} \frac{\left(\frac{\lambda_1}{2}\right)^{l_1} {j_1 \choose l_1}}{l_1 !}  \, \sum_{j_2=0}^{r_2} {r_2 \choose j_2} j_2 ! \left(\alpha_2-1\right)_{r_2-j_2} \cdot \\
& \cdot & \sum_{l_2=0}^{j_2} \frac{\left(\frac{\lambda_2}{2}\right)^{l_2} \, {j_2 \choose l_2} \, \; \Gamma\left(\alpha^++l_1+l_2\right)}{l_2 ! \; \Gamma\left(\alpha^++r^++l_1+l_2\right)} \,  _1F_1\left(\alpha^++l_1+l_2;\alpha^++r^++l_1+l_2;\frac{\lambda^+}{2}\right)
\end{eqnarray*}
and, by observing that:
$$
\left\{\begin{array}{l} j_i=0, \, \ldots \, , r_i \\ l_i=0, \, \ldots \, , j_i \end{array} \right. \quad\Rightarrow \quad 0 \, \leq \, l_i \, \leq \, j_i \, \leq \, r_i \quad \Rightarrow \quad 	\left\{\begin{array}{l} l_i=0, \, \ldots \, ,r_i \\ j_i=l_i, \, \ldots \, ,r_i\end{array} \right. \, ,
$$
one can obtain: 
\begin{eqnarray}
\lefteqn{\mathbb{E}\left[\left(X'_1\right)^{r_1} \left(X'_2\right)^{r_2}\right]=e^{-\frac{\lambda^+}{2}} \, \sum_{l_1=0}^{r_1} \frac{\left(\frac{\lambda_1}{2}\right)^{l_1}}{\left(l_1 !\right)^2} \cdot} \nonumber \\
& \cdot & \sum_{l_2=0}^{r_2} \frac{\left(\frac{\lambda_2}{2}\right)^{l_2} \, \Gamma\left(\alpha^++l_1+l_2\right) \, _1F_1\left(\alpha^++l_1+l_2;\alpha^++r^++l_1+l_2;\frac{\lambda^+}{2}\right)}{\left(l_2 !\right)^2 \, \Gamma\left(\alpha^++r^++l_1+l_2\right)} \cdot \nonumber \\
& \cdot & \sum_{j_1=\, l_1}^{r_1} {r_1 \choose j_1} \left(\alpha_1-1\right)_{r_1-j_1} \frac{\left(j_1 !\right)^2}{\left(j_1-l_1\right)!}  \, \sum_{j_2=\, l_2}^{r_2} {r_2 \choose j_2} \left(\alpha_2-1\right)_{r_2-j_2} \frac{\left(j_2 !\right)^2}{\left(j_2-l_2\right)!} \, .
\label{eq:mom.dim4}
\end{eqnarray}
By setting $p_i=j_i-l_i \Leftrightarrow j_i=p_i+l_i$, $i=1,2$ and by Eq.~(\ref{eq:poch.symb.binom}), each of the two final sums in Eq.~(\ref{eq:mom.dim4}) turns out to be tantamount to:
\begin{eqnarray}
\lefteqn{\sum_{j_i=l_i}^{r_i} {r_i \choose j_i} \left(\alpha_i-1\right)_{r_i-j_i} \frac{\left(j_i !\right)^2}{\left(j_i-l_i\right)!}=\sum_{p_i=0}^{r_i-l_i} {r_i \choose p_i+l_i} \left(\alpha_i-1\right)_{r_i-p_i-l_i} \frac{\left[\left(p_i+l_i\right)!\right]^2}{p_i!}}\nonumber \\
& = & \frac{r_i !}{\left(r_i-l_i\right)!}\sum_{p_i=0}^{r_i-l_i} {r_i-l_i \choose p_i} \left(\alpha_i-1\right)_{r_i-p_i-l_i} \, \left(l_i+p_i\right)!= \nonumber \\
& = & \frac{r_i ! \, l_i !}{\left(r_i-l_i\right)!}\sum_{p_i=0}^{r_i-l_i} {r_i-l_i \choose p_i} \left(\alpha_i-1\right)_{r_i-p_i-l_i} \, \left(l_i+1\right)_{p_i}=\nonumber \\
& = & \frac{r_i ! \, l_i !}{\left(r_i-l_i\right)!}\left[\left(\alpha_i-1\right)+\left(l_i+1\right)\right]_{r_i-l_i}=\frac{r_i ! \, l_i !}{\left(r_i-l_i\right)!} \left(\alpha_i+l_i\right)_{r_i-l_i} \, ;
\label{eq:mom.dim5}
\end{eqnarray}
hence, by Eq.~(\ref{eq:mom.dim5}), Eq.~(\ref{eq:mom.dim4}) can be stated in the following form:
\begin{eqnarray}
\lefteqn{\mathbb{E}\left[\left(X'_1\right)^{r_1} \left(X'_2\right)^{r_2}\right]=e^{-\frac{\lambda^+}{2}} \, \sum_{l_1=0}^{r_1} \sum_{l_2=0}^{r_2}  } \nonumber \\
&  & \left[ \frac{{r_1 \choose l_1} \, {r_2 \choose l_2} \left(\frac{\lambda_1}{2}\right)^{l_1} \left(\frac{\lambda_2}{2}\right)^{l_2} \, \Gamma\left(\alpha^++l_1+l_2\right) \, \left(\alpha_1+l_1\right)_{r_1-l_1} \, \left(\alpha_2+l_2\right)_{r_2-l_2}}{\Gamma\left(\alpha^++r^++l_1+l_2\right)} \cdot  \right. \nonumber \\
& \cdot & \left. _1F_1\left(\alpha^++l_1+l_2;\alpha^++r^++l_1+l_2;\frac{\lambda^+}{2}\right) \right] \, .
\label{eq:mom.dim6}
\end{eqnarray}
Eq.~(\ref{eq:mom.dim6}) can be finally exhibited in the same form as in Eq.~(\ref{eq:ncdir.momr1r2}) by noting that:
\begin{eqnarray*}
\Gamma\left(\alpha^++l_1+l_2\right) & = & \Gamma\left(\alpha^+\right) \, \left(\alpha^+\right)_{l_1+l_2} \,  ,\\
\Gamma\left(\alpha^++r^++l_1+l_2\right) & = & \Gamma\left(\alpha^+\right) \, \left(\alpha^+\right)_{r^+} \, \left(\alpha^++r^+\right)_{l_1+l_2} \,  ,\\
\left(\alpha_i+l_i\right)_{r_i-l_i} & = & \frac{\left(\alpha_i\right)_{r_i}}{\left(\alpha_i\right)_{l_i}} \, \qquad i=1,2
\end{eqnarray*}
in light of Eqs.~(\ref{eq:poch.symb}),~(\ref{eq:poch.symb.sum}) and~(\ref{eq:poch.symb.ratio}).
\end{proof}

Hence, the formula of the mixed raw moment of order $(1,1)$ of the $\mbox{\normalfont{NcDir}}^{\, 2}$ distribution can be obtained by taking $r_1=r_2=1$ in Eq.~(\ref{eq:ncdir.momr1r2}) as follows:
\begin{eqnarray}
\mathbb{E}\left(X'_1 \, X'_2\right) & = & \frac{\alpha_1 \, \alpha_2}{\left(\alpha^+\right)_2} \; e^{-\frac{\lambda^+}{2}} \, _1F_1\left(\alpha^+;\alpha^++2;\frac{\lambda^+}{2}\right)+ \nonumber \\
& + & \frac{\alpha_1 \, \frac{\lambda_2}{2}+ \alpha_2 \, \frac{\lambda_1}{2}}{\left(\alpha^++1\right)_2} \; e^{-\frac{\lambda^+}{2}} \, _1F_1\left(\alpha^++1;\alpha^++3;\frac{\lambda^+}{2}\right)+ \nonumber \\
& + & \frac{\frac{\lambda_1}{2} \, \frac{\lambda_2}{2}}{\left(\alpha^++2\right)_2} \; e^{-\frac{\lambda^+}{2}} \,  _1F_1\left(\alpha^++2;\alpha^++4;\frac{\lambda^+}{2}\right) \, .
\label{eq:ncdir.mom11}
\end{eqnarray}
This latter can be algebraically manipulated with the aim to reduce the number of distinct functions $\, _1F_1$ appearing in it. More precisely, a restatement of Eq.~(\ref{eq:ncdir.mom11}) depending only on two functions $_1F_1$ instead of three is derived herein. In this regard, in carrying out the needed computations, reference must be made to the previously specified recurrence identities valid for the function $\, _1F_1$. Specifically, by taking $a=\alpha^++2$, $b=\alpha^++4$, $x=\lambda^+/ \, 2$ in the second formula in Eq.~(\ref{eq:f11.recurr.relat}), one obtains:
\begin{eqnarray}
\lefteqn{2 \, _1F_1\left(\alpha^++1;\alpha^++4;\frac{\lambda^+}{2}\right)=\left(\alpha^++3\right) \cdot} \nonumber \\
& \cdot &   _1F_1\left(\alpha^++2;\alpha^++3;\frac{\lambda^+}{2}\right) -\left(\alpha^++1+\frac{\lambda^+}{2}\right) \, _1F_1\left(\alpha^++2;\alpha^++4;\frac{\lambda^+}{2}\right) \, , \quad \quad
\label{eq:ncdir.mom11.dim1}
\end{eqnarray}
whereas the special case of the first formula in Eq.~(\ref{eq:f11.recurr.relat}) where $a=\alpha^++1$, $b=\alpha^++3$, $x=\lambda^+/ \, 2$ leads to:
\begin{eqnarray}
\lefteqn{\left(\alpha^++3\right)\left(\alpha^++1+\frac{\lambda^+}{2}\right) \, _1F_1\left(\alpha^++1;\alpha^++3;\frac{\lambda^+}{2}\right)=\left(\alpha^++1\right) \cdot} \nonumber \\
& \cdot & \left(\alpha^++3\right) \, _1F_1\left(\alpha^++2;\alpha^++3;\frac{\lambda^+}{2}\right) +2 \cdot \frac{\lambda^+}{2} \,  _1F_1\left(\alpha^++1;\alpha^++4;\frac{\lambda^+}{2}\right) \, ; \quad
\label{eq:ncdir.mom11.dim2}
\end{eqnarray}
under Eq.~(\ref{eq:ncdir.mom11.dim1}), Eq.~(\ref{eq:ncdir.mom11.dim2}) reduces to:
\begin{eqnarray}
\lefteqn{\left(\alpha^++1+\frac{\lambda^+}{2}\right)\left\{\left(\alpha^++3\right) \left[\, _1F_1\left(\alpha^++1;\alpha^++3;\frac{\lambda^+}{2}\right) \right. \right. +}\nonumber \\
& - & \left. \left. \, _1F_1\left(\alpha^++2;\alpha^++3;\frac{\lambda^+}{2}\right)\right]+\frac{\lambda^+}{2} \, _1F_1\left(\alpha^++2;\alpha^++4;\frac{\lambda^+}{2}\right)\right\}=0
\label{eq:ncdir.mom11.dim3}
\end{eqnarray}
and setting $a=\alpha^++2$, $b=\alpha^++3$, $x=\lambda^+/ \, 2$ in the second formula in Eq.~(\ref{eq:f11.recurr.relat}) yields:
\begin{eqnarray}
\lefteqn{\left(\alpha^++1+\frac{\lambda^+}{2}\right) \,  _1F_1\left(\alpha^++2;\alpha^++3;\frac{\lambda^+}{2}\right)=}\nonumber \\
& = & \left(\alpha^++2\right) \, e^{\frac{\lambda^+}{2}} - \, _1F_1\left(\alpha^++1;\alpha^++3;\frac{\lambda^+}{2}\right) \, ,
\label{eq:ncdir.mom11.dim4}
\end{eqnarray}
so that, under Eq.~(\ref{eq:ncdir.mom11.dim4}), Eq.~(\ref{eq:ncdir.mom11.dim3}) specializes into:
\begin{eqnarray}
\lefteqn{_1F_1\left(\alpha^++2;\alpha^++4;\frac{\lambda^+}{2}\right)=\frac{\alpha^++3}{\frac{\lambda^+}{2}\left(\alpha^++1+\frac{\lambda^+}{2}\right)} \cdot}\nonumber \\
& \cdot & \left[\left(\alpha^++2\right) \, e^{\frac{\lambda^+}{2}}-\left(\alpha^++2+\frac{\lambda^+}{2}\right) \,  _1F_1\left(\alpha^++1;\alpha^++3;\frac{\lambda^+}{2}\right)\right] \, .
\label{eq:ncdir.mom11.dim5} 
\end{eqnarray}
Finally, rearranging Eq.~(\ref{eq:ncdir.mom11}) under Eq.~(\ref{eq:ncdir.mom11.dim5}) leads by simple computations to the following improved expression for the moment under consideration:
\begin{eqnarray*}
\lefteqn{\mathbb{E}\left(X'_1 \, X'_2\right)=\frac{\alpha_1 \, \alpha_2}{\left(\alpha^+\right)_2} \, e^{-\frac{\lambda^+}{2}} \, _1F_1\left(\alpha^+;\alpha^++2;\frac{\lambda^+}{2}\right)+} \\
& + & \frac{1}{\alpha^++2} \left[\frac{\alpha_1 \frac{\lambda_2}{2}+\alpha_2 \frac{\lambda_1}{2}}{\alpha^++1}-\frac{\frac{\lambda_1}{2} \frac{\lambda_2}{2}}{\alpha^++1+\frac{\lambda^+}{2}} \right] \, e^{-\frac{\lambda^+}{2}} \, _1F_1\left(\alpha^++1;\alpha^++3;\frac{\lambda^+}{2}\right)+ \\
& + & \frac{\frac{\lambda_1}{2} \frac{\lambda_2}{2}}{\frac{\lambda^+}{2}\left(\alpha^++1+\frac{\lambda^+}{2}\right)} \left[1-e^{-\frac{\lambda^+}{2}} \, _1F_1\left(\alpha^++1;\alpha^++3;\frac{\lambda^+}{2}\right)\right] \, . \qquad \qquad
\end{eqnarray*}

% ***************************************************************************************************************************************************
% ***************************************************************************************************************************************************
% ***************************************************************************************************************************************************
% *********************************************************** 4. SIMULATION RESULTS *****************************************************************
% ***************************************************************************************************************************************************
% ***************************************************************************************************************************************************
% ***************************************************************************************************************************************************

\section{Simulation results}
\label{sec:simul_res}

A simulation study aimed at confirming the validity of the proposed approach is clearly needed. Its accomplishment naturally requires the generation from the model at study. In this regard, the issue of simulating from the Non-central Dirichlet distribution is addressed herein by making use of the algorithm based on its definition in Eq.~(\ref{eq:ncdir.def}). Specifically, this latter simply demands to generate from independent Non-central Chi-Squared random variables. 

That said, numerical validations are first produced for the formula of the mixed raw moments in Eq.~(\ref{eq:ncdir.momr1r2}). Specifically, the following procedure is employed for this purpose. In detail, a sample of $n=30$ series of $N=10000$ random draws is generated from the model of interest for selected values of the parameter vector. The descriptive mixed raw moment of order $(r_1,r_2) \in \left\{(1,1), (1,2), (2,1), (2,2) \right\}$, given by $m_{(r_1,r_2)}\left(X'_1,X'_2\right)=\frac{1}{N}\sum_{i=1}^{N}x^{r_1}_{i1} \, x^{r_2}_{i2}$, is thus computed for each series; then, the sample mean and the standard deviation of this latter quantity are assessed for every above specified value of $(r_1,r_2)$. Hence, the two-tailed $Z$ test for large samples is used to check the null hypothesis that the true mean is equal to the value of the mixed raw moment of order $(r_1,r_2)$ of the considered model, $\mu_{(r_1,r_2)}\left(X'_1,X'_2\right)=\mathbb{E}[(X'_1)^{r_1} (X'_2)^{r_2}]$. The obtained results are listed in Table~\ref{tab:moment.test.ncdir} from which it is noticeable that all the conclusions are in favour of the non-rejection of the null hypothesis at study.

% ************************************************************* Table 1 ******************************************************************************
\begin{table}[ht]
\caption{Sample means ($\bar{x}$) and standard deviations ($s$) of the descriptive mixed raw moments of order $(r_1,r_2) \in \left\{(1,1), (1,2), (2,1), (2,2) \right\}$ of $n=30$ series of 10000 random draws from $\left(X'_1,X'_2\right) \sim \mbox{\normalfont{NcDir}}^{\, 2}\left(\alpha_1,\alpha_2,\alpha_3,\lambda_1,\lambda_2,\lambda_3\right)$ for selected values of the parameter vector and $p \, $-values of the two-tailed $Z$ test to check the null hypothesis that the true mean is equal to the mixed raw moment $\mu_{(r_1,r_2)}\left(X'_1,X'_2\right)$ of order $(r_1,r_2)$ of the model at study.}

\vspace{0.25cm}
\centering
\begin{tabular}{c||c|c|c|c|c}
$\left(\alpha_1,\alpha_2,\alpha_3,\lambda_1,\lambda_2,\lambda_3\right)$ & $\left(r_1,r_2\right)$ & $\mu_{(r_1,r_2)}\left(X'_1,X'_2\right)$ & $\bar{x}$ & $s$ & $p$-value\\ \hline \hline
\multirow{4}{3.65cm}{$(0.5,0.6,0.4,1.7,6.4,3.8)$} & (1,1) & 0.07426 & 0.07428 & 0.00079 & 0.91441\\
\cline{2-6} & \multicolumn{1}{c| }{(1,2)} & \multicolumn{1}{c| }{0.03819} & \multicolumn{1}{c| }{0.03820} & \multicolumn{1}{c| }{0.00048} & \multicolumn{1}{c}{0.88320}\\ 
\cline{2-6} & \multicolumn{1}{c| }{(2,1)} & \multicolumn{1}{c| }{0.02190} & \multicolumn{1}{c| }{0.02190} & \multicolumn{1}{c| }{0.00034} & \multicolumn{1}{c}{0.98675} \\ 
\cline{2-6} & \multicolumn{1}{c| }{(2,2)} & \multicolumn{1}{c| }{0.00973} & \multicolumn{1}{c| }{0.00974} & \multicolumn{1}{c| }{0.00017} & \multicolumn{1}{c}{0.76738}\\ \hline \hline
\multirow{4}{3.65cm}{$(0.2,0.3,1.6,1.3,5.5,4.2)$} & (1,1) & 0.03330 & 0.03331 & 0.00042 & 0.89517\\
\cline{2-6} & \multicolumn{1}{c| }{(1,2)} & \multicolumn{1}{c| }{0.01432} & \multicolumn{1}{c| }{0.01431} & \multicolumn{1}{c| }{0.00023} & \multicolumn{1}{c}{0.75554}\\ 
\cline{2-6} & \multicolumn{1}{c| }{(2,1)} & \multicolumn{1}{c| }{0.00821} & \multicolumn{1}{c| }{0.00821} & \multicolumn{1}{c| }{0.00013} & \multicolumn{1}{c}{0.97837} \\ 
\cline{2-6} & \multicolumn{1}{c| }{(2,2)} & \multicolumn{1}{c| }{0.00308} & \multicolumn{1}{c| }{0.00307} & \multicolumn{1}{c| }{0.00006} & \multicolumn{1}{c}{0.44343}\\ \hline \hline
\multirow{4}{3.65cm}{$(1.0,1.4,1.0,4.8,1.9,1.5)$} & (1,1) & 0.11511 & 0.11511 & 0.00065 & 0.97122\\
\cline{2-6} & \multicolumn{1}{c| }{(1,2)} & \multicolumn{1}{c| }{0.04233} & \multicolumn{1}{c| }{0.04237} & \multicolumn{1}{c| }{0.00034} & \multicolumn{1}{c}{0.55406}\\ 
\cline{2-6} & \multicolumn{1}{c| }{(2,1)} & \multicolumn{1}{c| }{0.05370} & \multicolumn{1}{c| }{0.05368} & \multicolumn{1}{c| }{0.00043} & \multicolumn{1}{c}{0.75197} \\ 
\cline{2-6} & \multicolumn{1}{c| }{(2,2)} & \multicolumn{1}{c| }{0.01718} & \multicolumn{1}{c| }{0.01718} & \multicolumn{1}{c| }{0.00018} & \multicolumn{1}{c}{0.93455}\\ \hline \hline
\multirow{4}{3.65cm}{$(1.7,3.1,2.4,2.9,3.7,0.8)$} & (1,1) & 0.11345 & 0.11347 & 0.00050 & 0.83070\\
\cline{2-6} & \multicolumn{1}{c| }{(1,2)} & \multicolumn{1}{c| }{0.05193} & \multicolumn{1}{c| }{0.05195} & \multicolumn{1}{c| }{0.00031} & \multicolumn{1}{c}{0.73831}\\ 
\cline{2-6} & \multicolumn{1}{c| }{(2,1)} & \multicolumn{1}{c| }{0.03747} & \multicolumn{1}{c| }{0.03748} & \multicolumn{1}{c| }{0.00028} & \multicolumn{1}{c}{0.91140} \\ 
\cline{2-6} & \multicolumn{1}{c| }{(2,2)} & \multicolumn{1}{c| }{0.01551} & \multicolumn{1}{c| }{0.01552} & \multicolumn{1}{c| }{0.00012} & \multicolumn{1}{c}{0.70376}
\end{tabular}
\small
\label{tab:moment.test.ncdir}
\end{table}
% *****************************************************************************************************************************************************

If on one hand the new representation of the Non-central Dirichlet model in Eq.~(\ref{eq:rappr.clc.ncdir}) only brings an element of theoretical elegance into the study of this latter, on the other the new formula for the mixed raw moments in Eq.~(\ref{eq:ncdir.momr1r2}) allows to overcome the disadvantages of the existing formula in Eq.~(\ref{eq:ncdir.mixr1r2mom.def2}) due to its doubly-infinite-series structure. As a matter of fact, the former formula is computationally less demanding than the latter thanks to its more easily handeable finite-sum form. Specifically, the derived moment formula requires a less implementation effort and calls for a less considerable execution time to produce the desired results than the existing one. In this regard, the computational performances of the two formulas are compared in the following way. The overall amount of machine-time spent to compute the mixed raw moments of order $(r_1,r_2) \in \left\{(1,1), (1,2), (2,1), (2,2) \right\}$ of the NcDir distribution by using both the derived and the existing formulas is measured $n=30$ times for each value of the parameter vector considered in Table~\ref{tab:moment.test.ncdir}. Then, the sample mean and standard deviation of this latter quantity are computed in each of the above cases. Hence, the null hypothesis that the true mean of the execution time of the new formula is not inferior to the one of the existing formula is checked by using the one-tailed $Z$ test for large samples. The achieved results are listed in Table~\ref{tab:moment.time.ncdir}, from which we conclude that the new formula is beyond doubt to be preferred to the existing one for its major computational efficiency. More precisely, the obtained findings suggest that on average the execution time of the existing formula is approximately of the order of more than 50 times slower than the one of the new formula.

% ************************************************************* Table 2 ******************************************************************************
\begin{table}[ht]
\caption{Means ($\bar x$) and standard deviations ($s$) of the overall machine-time (in seconds) to compute the mixed raw moments of order $(r_1,r_2) \in \left\{(1,1), (1,2), (2,1), (2,2) \right\}$ of the bivariate NcDir distribution by using the derived formula (``Sum'') and the existing formula (``Series'') for selected values of the parameter vector and $p$-values of the one-tailed $Z$ test to check the null hypothesis $H_0: \, \mu_{Sum}-\mu_{Series} \, \geq\, 0$ ($n=30$).}

\vspace{0.25cm}
\centering
\begin{tabular}{c||c|c|c|c}
\multirow{2}{3.25cm}{$\left(\alpha_1,\alpha_2,\alpha_3,\lambda_1,\lambda_2,\lambda_3\right)$} & \multicolumn{4}{c}{Time ($''$)} \\ 
\cline{2-5} & Formula & $\bar{x}$ & $s$ & $p$-value \\ \hline \hline
\multirow{2}{3.65cm}{$(0.5,0.6,0.4,1.7,6.4,3.8)$} & Sum & 0.01400 & 0.01429 & \multirow{2}{1.3cm}{$<.0001$} \\
\cline{2-4} & \multicolumn{1}{c| }{Series} & \multicolumn{1}{c| }{0.79700} & \multicolumn{1}{c| }{0.47738} & \multicolumn{1}{c}{} \\ \hline \hline
\multirow{2}{3.65cm}{$(0.2,0.3,1.6,1.3,5.5,4.2)$} & Sum & 0.00933 & 0.01143 & \multirow{2}{0.15cm}{$0$} \\
\cline{2-4} & \multicolumn{1}{c| }{Series} & \multicolumn{1}{c| }{0.61100} & \multicolumn{1}{c| }{0.02295} & \multicolumn{1}{c}{} \\ \hline \hline
\multirow{2}{3.65cm}{$(1.0,1.4,1.0,4.8,1.9,1.5)$} &  Sum & 0.01033 & 0.01217 & \multirow{2}{0.15cm}{$0$}   \\
\cline{2-4} & \multicolumn{1}{c| }{Series} & \multicolumn{1}{c| }{0.40400} & \multicolumn{1}{c| }{0.01354} & \multicolumn{1}{c}{} \\ 
\hline \hline
\multirow{2}{3.65cm}{$(1.7,3.1,2.4,2.9,3.7,0.8)$} & Sum & 0.00700 & 0.00952 & \multirow{2}{0.15cm}{$0$}   \\
\cline{2-4} & \multicolumn{1}{c| }{Series} & \multicolumn{1}{c| }{0.37500} & \multicolumn{1}{c| }{0.00900} & \multicolumn{1}{c}{} 
\end{tabular}
\small
\label{tab:moment.time.ncdir}
\end{table}
% *********************************************************************************************************************************************

\noindent All the simulations and the procedures of interest have been carried out by means of routines implemented by the Author in the statistical environment \texttt{R}. 

% ***************************************************************************************************************************************************
% ***************************************************************************************************************************************************
% ***************************************************************************************************************************************************
% **************************************************************** 5 CONCLUSIONS ********************************************************************
% ***************************************************************************************************************************************************
% ***************************************************************************************************************************************************
% ***************************************************************************************************************************************************
\section{Conclusions}
\label{sec:concl}

In the present paper new light was shed on the analysis of the Non-central Dirichlet distribution. As a matter of fact, the remarkable mathematical complexity affecting the handling of such a probabilistic model was faced herein by introducing a novel approach to analyzing it based on a relationship of conditional independence and a simple conditional density. Resorting to this approach led us to achieve new results inherent in the stochastic representation and the computation of the mixed raw moments of the model under consideration. In particular, the derived moment formula, thanks to its finite-sum structure, guaranteed a greater computational efficiency than the existing one, this latter having an infinite-series structure. In conclusion, we hope this approach and the obtained results may attract wider applications of this model in statistics.    

% ***************************************************************************************************************************************************
% ***************************************************************************************************************************************************
% ***************************************************************************************************************************************************
% *************************************************************** REFERENCES **********************************************************************
% ***************************************************************************************************************************************************
% ***************************************************************************************************************************************************
% ***************************************************************************************************************************************************

\end{document}